\pgfplotsset{compat = newest}
\newtheorem{theorem}{Theorem}
\newtheorem{lemma}[theorem]{Lemma}
\newtheorem{definition}{Definition}
\newtheorem{problem}[theorem]{Problem}
\newtheorem{corollary}[theorem]{Corollary}
\newtheorem{claim}[theorem]{Claim}
\newcommand{\inter}{\rm int}
\newcommand{\conv}{\rm conv}
\newcommand{\oo}{\mathbf{o}}
\newcommand{\cl}{{\rm cl}}
\newcommand{\noshow}[1]{}
\DeclareRobustCommand{\hlc}[2][yellow]{ {\sethlcolor{#1} \hl{#2}} }
\title{Vertex Classification of Planar C-Polygons
\footnote{}
\author{Illya Ivanov and Cameron Strachan}}
\date{}
\begin{document}

\maketitle

\begin{abstract}
\noindent Given a convex domain $C$, a $C$-polygon is an intersection of $n\geq 2$ homothets of $C$. If the homothets are translates of $C$ then we call the intersection a translative $C$-polygon. This paper proves that if $C$ is a strictly convex domain with $m$ singular boundary points, then the number of singular boundary points a $C$-polygon has is between $n$ and $2(n-1)+m$. For a translative $C$-polygon we show the number of singular boundary points is between $n$ and $n+m$.
\end{abstract}

\section{Introduction}\label{S1}

Let $\mathbb{E}^2$ denote the 2-dimensional planar Euclidean space, and $\mathbb{S}^1$ the unit sphere inside $\mathbb{E}^2$. A set $X\subseteq\mathbb{E}^2$ is convex when the line segment connecting any two points in $X$ is contained in $X$. We denote the interior, boundary and convex hull of $X$, as $\text{int}(X)$, $ \mathrm{bd}(X)$ and $\conv(X)$ respectively. A set $C\subseteq \mathbb{E}^2$ is convex domain when it is compact, convex, and has non-empty interior. If the relative interior of the line segment connecting any two boundary points is contained in the interior of $C$, then we call $C$ a strictly convex domain. 

Let $C$ be a convex domain, the Gauss mapping of $C$, which is denoted as $\Gamma_C$, is the mapping between the boundary of $C$ and $\mathbb{S}^1$ where each boundary point, $x$, gets mapped to the set of normal unit vectors of the supporting lines of $C$ at the boundary point $x$. The Gauss mapping of any convex domain is always surjective and functional. When the Gauss mapping of $C$ is well defined we call $C$ smooth, and when the Gauss mapping is injective this is equivalent to $C$ being strictly convex. When a convex domain is both strictly convex and smooth we call it a smooth strictly convex domain, which is equivalent to having a bijective function as a Gauss mapping. Let $X_i$ be a subset of $\mathbb{E}^2$ for each $i\in \{1, 2, \dots, n\}$, we call $\cap_{i=1}^n X_i$ a reduced intersection when $\cap_{i=1}^n X_i\subsetneq \cap_{i=1,i\neq j}^n X_i$ for every $j\in \{1,2,\dots,n\}$. We also denote $X_1+X_2=\{x_1+x_2 | x_1\in X_1,x_2\in X_2\}$ and $\lambda X_1=\{\lambda x_1 |x_1\in X_1\}$ where $\lambda$ is a positive real.

\begin{definition} \label{D1}
We call a finite intersection of sets, $\cap_{i=1}^n X_i$ a proper intersection when $\cap_{i=1}^nX_i$ has non-empty interior and is a reduced intersection. We call $\cap_{i=1}^nX_i$ an improper intersection otherwise.
\end{definition}

The main purpose of this paper is to provide a somewhat analogous result to the famous upper bound theorem of McMullen \cite{UB}, in the planar case. Given a convex polytope in $d$-dimensional space, made by intersecting $n$ halfspaces, the upper bound theorem provides sharp upper bounds for the number of $i$-dimensional faces this polytope can have for $i\in\{0,1,\dots,d-1\}$. We investigate the same question one could ask about $C$-polygons which are defined as follows. It should be noted that the results and definitions in this paper are strictly planar, although generalization to higher dimensions can naturally be defined.

\begin{definition} \label{D2}
Given a convex domain, $C\subseteq \mathbb{E}^2$, a set of $n$ points $\{x_1,x_2,\dots, x_n\}\subseteq \mathbb{E}^2$, and a set of $n$ positive scalars $\{\lambda_1,\lambda_2,\dots,\lambda_n\}\subseteq \mathbb{R}$ where $n\geq 2$, we denote $H_i=x_i+\lambda_iC$ and  $T_i=x_i+C$, and define a $C$-polygon, $H$, and a translative $C$-polygon, $T$, as the following intersections if these intersections are proper.
$$H:=\cap_{i=1}^nH_i=\cap_{i=1}^nx_i+\lambda_iC~~\text{ and }~~T:=\cap_{i=1}^nT_i=\cap_{i=1}^nx_i+C $$ 
We call each $H_i$ or $T_i$ for $i\in \{1,2,\dots n\}$ a generating homothet or generating translate respectively.
\end{definition}

When $C$ is a ball, then the definitions of $C$-polygons and translative $C$-polygons, define classical notions of generalized ball polygons and ball polygons \cite{BP}, respectively. We investigate a somewhat dual version of \cite{SOTU} and study the complexity of the boundary structure of $C$-polygons and translative $C$-polygons, and how varied the boundary structure can be if we fix the number of generating translates or homothets. Specifically, we give upper and lower bounds on how many singular boundary points these objects can have when $C$ is a strictly convex domain. We call these singular boundary points the vertices of the $C$-polygon and denote the set of them as $Vert(H)$. The following is the main result established in this paper.

\begin{theorem}\label{T1}
Let $T=\cap_{i=1}^nT_i$ and $H=\cap_{i=1}^nH_i$ be a translative $C$-polygon and $C$-polygon respectively. If $C$ is a strictly convex domain with $m$ singular boundary points then
$$ n\leq |Vert(T)|\leq n+m, \text{ and  } n\leq |Vert(H)|\leq 2(n-1)+m.  $$
\end{theorem}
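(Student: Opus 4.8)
The plan is to analyse $\partial H$ (resp.\ $\partial T$) by cutting it into maximal arcs, each carried by the boundary of a single generating homothet, and then to count the two kinds of vertices that can arise on such a curve. Fix an interior point $\oo$ of $H$ (which exists, the intersection being proper) and describe $\partial H$ in polar coordinates around $\oo$; since $H=\cap_i H_i$ the radial function satisfies $\rho_H=\min_i\rho_{H_i}$. In each direction some $H_i$ is \emph{active} (attains the minimum), so $\partial H$ is partitioned into arcs $A_1,\dots ,A_k$ in cyclic order with $A_\ell\subseteq\partial H_{j_\ell}$, and at each \emph{transition point} between consecutive arcs $\partial H$ passes from one generating boundary to another. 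Reducedness forces every $H_i$ to be active on at least one arc, so $k\ge n$.

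First I would show every transition point is a vertex of $H$. If $p$ is a transition where $\partial H_i$ and $\partial H_j$ meet ($i\ne j$) and $\partial H$ were smooth at $p$, then $\partial H_i$ and $\partial H_j$ would share the tangent line of $\partial H$ at $p$; the key preliminary lemma, true precisely because $C$ is strictly convex, is that two distinct homothets of a strictly convex domain whose boundaries are tangent at a point are nested or meet only there, so $H_i\cap H_j$ would fail to be reduced or have empty interior, contradicting properness of $H$. Hence each transition is a genuine singular boundary point, and in particular $|Vert(H)|\ge k\ge n$; the same reasoning gives $|Vert(T)|\ge n$, so the lower bounds are done.

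Next I would bound the vertices that are \emph{not} transition points. Such a vertex $p$ lies in the relative interior of some arc $A_\ell$, where $\partial H$ coincides locally with $\partial H_{j_\ell}$, so $p$ is singular iff $\lambda_{j_\ell}^{-1}(p-x_{j_\ell})$ is a singular point of $C$; call these the inherited vertices. Since translations and positive dilations preserve normal cones, the normal cone of $H$ at such a $p$ contains the nondegenerate arc $N_k\subseteq\Ss^1$ which is the normal cone of $C$ at the corresponding singular point $s_k$. Because the Gauss map of the convex domain $H$ is monotone, the normal cones at distinct vertices of $H$ have pairwise disjoint interiors; as the $m$ arcs $N_1,\dots ,N_m$ also have pairwise disjoint interiors, each $N_k$ is used by at most one inherited vertex, so there are at most $m$ inherited vertices. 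Combining, $|Vert(H)|\le k+m$ and $|Vert(T)|\le k+m$, and it remains only to bound the number of arcs $k$.

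Here the two cases diverge. For translates, the support function of $T$ shows that the translate active in direction $u$ is the one whose centre is extreme in direction $-u$; hence the active label is constant on each cell of the normal fan of $\mathrm{conv}\{x_1,\dots ,x_n\}$, so each $T_i$ is active on at most one arc and $k\le n$ (with equality by reducedness), giving $|Vert(T)|\le n+m$. For homothets, the crucial geometric input, again from strict convexity, is that the boundaries of any two distinct generating homothets cross transversally at most twice, i.e.\ the generating homothets form a family of pseudodisks; consequently the cyclic sequence of active labels around $\partial H$ is a Davenport--Schinzel sequence of order $2$ over $\{1,\dots ,n\}$, and the maximal length of such a circular sequence is $2(n-1)$, so $k\le 2(n-1)$ and $|Vert(H)|\le 2(n-1)+m$. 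The main work, and the step I expect to be most delicate, is exactly this last ingredient: proving rigorously that homothets of a strictly convex domain behave as pseudodisks and extracting the sharp $2(n-1)$ bound on the number of arcs, while carefully handling tangential contacts, coincident transition points, and the degenerate configurations that the properness hypothesis exists to exclude.
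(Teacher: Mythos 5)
Your proposal is correct in outline and reaches the theorem by a genuinely different route in the two places where the real work happens. The shared skeleton is the same as the paper's: decompose $\mathrm{bd}(H)$ into maximal arcs carried by single generating homothets (the paper's ``edge families''), show transition points are singular (the paper does this right after its Lemma \ref{PL}, via injectivity of the Gauss map of the strictly convex $C$ --- your tangency-exclusion lemma is true and provable exactly that way), and bound the ``inherited'' vertices by $m$ using disjointness of normal cones, which is word-for-word the paper's Section \ref{S4} argument. Where you diverge: (i) for the homothet bound $k\le 2(n-1)$ on the number of arcs, you invoke the cyclic Davenport--Schinzel order-$2$ / pseudodisk-intersection bound as a known fact, whereas the paper proves it from scratch by induction --- its gap families, the laminarity statement (Corollary \ref{C8}), and the existence of a singleton-edge family are precisely a self-contained proof that a cyclic sequence over $n$ symbols with no adjacent repetitions and no cyclic $abab$ has length at most $2n-2$. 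Your version is shorter but outsources the step you yourself flag as the delicate one; the pseudodisk property itself is exactly the paper's Lemma \ref{PL}, so nothing is missing in principle, but a referee would want either the citation made precise (the linear bound $\lambda_2(n)=2n-1$ is not enough; you need the cyclic refinement to $2n-2$) or the short laminar argument written out. (ii) For translates you argue via the normal fan of $\mathrm{conv}\{x_1,\dots,x_n\}$ that each translate is active on exactly one arc, so $k\le n$ directly; the paper instead runs another induction using a hemisphere bound on the Gauss image of $\mathrm{bd}(T_1)\setminus T_2$ (its Lemma \ref{L10}) to show that inserting the last translate destroys at least one old vertex. Your normal-fan argument is cleaner and arguably more illuminating, but it needs two small supporting observations you should make explicit: reducedness forces the centres $x_i$ to be distinct vertices of their convex hull (otherwise some $T_j$ contains the intersection of the others), and the Gauss image of each maximal arc is a nondegenerate subarc of the corresponding (negated) normal cone, so two arcs with the same label would force some other label's normal cone to overlap it in a nondegenerate arc. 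With those details supplied, both of your replacements are sound, and each buys something: the DS-sequence view connects the result to standard union/intersection complexity machinery, and the normal-fan view explains why the translative count is exactly $n$ rather than merely at most $n$.
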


Our general approach in proving Theorem \ref{T1} is to handle the case where $m=0$, or equivalently the case where $C$ is smooth, and then extend this to the case where $m$ is positive. In section \ref{S2}, we establish some basic properties of $C$-polygons where $C$ is a smooth strictly convex domain. In section \ref{S3} we prove Theorem \ref{T1} when $m=0$, and in section \ref{S4} we prove the case of Theorem \ref{T1} where $m$ is positive. Finally in section \ref{S5} we explore the sharpness of the bounds established in Theorem \ref{T1}. 

\section{Basic Properties for \textit{C}-polygons where \textit{C} is a smooth strictly convex domain}\label{S2}

We start with defining a natural face structure for $C$-polygons, where $C$ is a smooth strictly convex domain. It suffices to define them for $C$-polygons as translative $C$-polygons are a particular example of $C$-polygons. Let $H=\cap_{i=1}^nH_i$ be a $C$-polygon, each generating homothet will be a smooth strictly convex domain, which implies $H$ must also be a strictly convex domain. Since $H$ is a strictly convex domain, its boundary can be separated into singular points and smooth boundary arcs between singular points, we define these to be vertices and edges respectively of our $C$-polygon. However, in order to prove our result we come up with a more useful equivalent characterization of vertices and edges. We can do this because $H$ is not merely a strictly convex domain, it is a proper intersection of $n$ homothets of $C$, which allows for the equivalence of the following definitions with vertices and edges described above.
\begin{definition} \label{D3}
Given a generating homothet $H_j$ of a $C$-polygon $H$, the edge family of $H_j$ is denoted and defined as $$\mathcal{E}_j= (\mathrm{bd} (H)\cap  \mathrm{bd} (H_j))\setminus A .$$
The intersection $\mathrm{bd} (H)\cap  \mathrm{bd} (H_j)$ is a union of maximally connected closed boundary arcs of $H_j$ and the set $A$ contains all the arcs that are singleton points. We call each maximally connected closed boundary arc of $\mathcal{E}_j$ an edge of $H$. We further denote $E_j$ as the set whose elements are all the edges in $\mathcal{E}_j$.
\end{definition}
\begin{definition} \label{D4}
A point $v\in  \mathrm{bd}(H)$ is a vertex if it lies in the boundary of at least two generating homothets.
\end{definition}

\begin{figure}[!ht]
\centering
\begin{subfigure}{.33\textwidth}
  \centering
  \begin{tikzpicture} 
\node [above right, inner sep=0] (image) at (0,0)   {\includegraphics[width=0.8\linewidth]{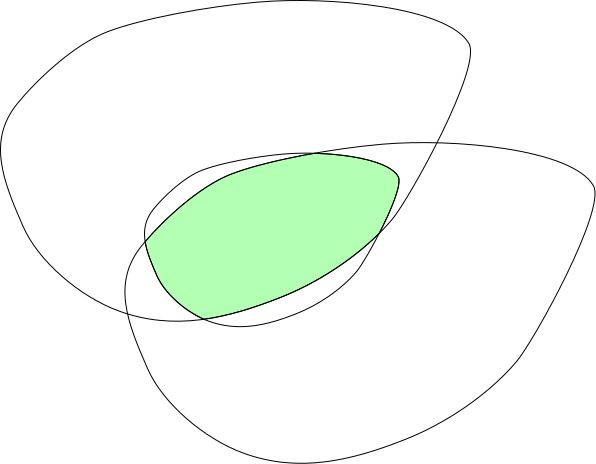}};   
\begin{scope}[
x={($0.1*(image.south east)$)},
y={($0.1*(image.north west)$)}];
\end{scope}
\end{tikzpicture}
\caption{}
\end{subfigure}%
\begin{subfigure}{.33\textwidth}
  \centering
    \begin{tikzpicture} 
\node [above right, inner sep=0] (image) at (0,0) {\includegraphics[width=0.8\linewidth]{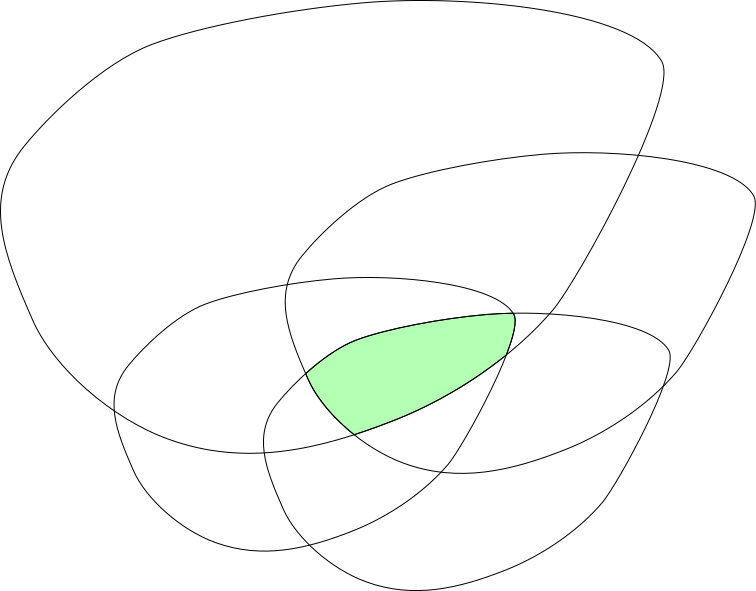}};  
\begin{scope}[
x={($0.1*(image.south east)$)},
y={($0.1*(image.north west)$)}];
       
    
\end{scope}
\end{tikzpicture}
\caption{}
\end{subfigure}
\begin{subfigure}{.33\textwidth}
  \centering
    \begin{tikzpicture} 
\node [above right, inner sep=0] (image) at (0,0) 
{\includegraphics[width=0.8\linewidth]{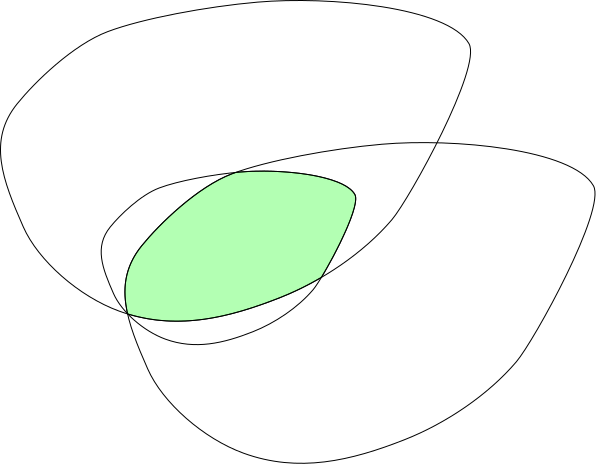}};
\begin{scope}[
x={($0.1*(image.south east)$)},
y={($0.1*(image.north west)$)}];
       
    
\end{scope}
\end{tikzpicture}
\caption{}
\end{subfigure}
\caption{Three examples of $C$-polygons.}
\label{F1}
\end{figure}

Diagram (c) in Figure \ref{F1} shows that there can be cases in which the non-singleton maximally connected component of Definition \ref{D3} comes into play. If an edge family has more than one edge in it we call it a multi-edge family, and if it has only one edge we call it a singleton-edge family.
In order to show these definitions of vertices and edges align with singular points and smooth boundary arcs of $H$ between singular boundary points, we need the following lemma.

\begin{lemma} \label{PL}
Suppose $H_1$ and $H_2$ are two homothets of a strictly convex domain, $C$, whose intersection is proper, then $$| \mathrm{bd}(H_1)\cap  \mathrm{bd}(H_2)|=2.  $$
\end{lemma}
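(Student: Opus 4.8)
The claim is that two homothets $H_1, H_2$ of a strictly convex domain $C$ whose intersection is proper must have boundaries meeting in exactly two points. I would organize the proof around two separate inequalities: $|\mathrm{bd}(H_1)\cap\mathrm{bd}(H_2)|\geq 2$ and $|\mathrm{bd}(H_1)\cap\mathrm{bd}(H_2)|\leq 2$, and I would exploit throughout the fact that the intersection being proper means $H_1\cap H_2$ has nonempty interior, is neither $H_1$ nor $H_2$, and that neither contains the other.

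For the lower bound, since the intersection is reduced, $H_1\cap H_2\subsetneq H_1$ and $H_1\cap H_2\subsetneq H_2$, so there is a point $a\in H_1\setminus H_2$ and a point $b\in H_2\setminus H_1$; also there is an interior point $c$ of $H_1\cap H_2$. The segments $[c,a]$ and $[c,b]$ each cross one of the boundaries, producing a point of $\mathrm{bd}(H_2)\cap\inte(H_1)$ and a point of $\mathrm{bd}(H_1)\cap\inte(H_2)$; a short connectedness/Jordan-curve argument on $\mathrm{bd}(H_1\cap H_2)$ then forces at least two points where $\mathrm{bd}(H_1)$ and $\mathrm{bd}(H_2)$ actually cross each other. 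Alternatively, and more cleanly, I would note that $\mathrm{bd}(H_1\cap H_2)$ is a simple closed curve that is partitioned into the arcs lying on $\mathrm{bd}(H_1)$ and those lying on $\mathrm{bd}(H_2)$; since both types of arc genuinely occur (by the previous sentence), the curve must switch between them, and each switch point lies in $\mathrm{bd}(H_1)\cap\mathrm{bd}(H_2)$, giving at least two such points.

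The upper bound is the heart of the matter and the place I expect the real obstacle. Suppose for contradiction there are three distinct common boundary points $p,q,r$. The key structural fact is that $H_2 = \mu H_1 + t$ for some $\mu>0$ and translation $t$ (both are homothets of $C$, hence of each other). I would split into the translative case $\mu=1$ and the genuinely-scaled case $\mu\neq 1$. When $\mu=1$, $\mathrm{bd}(H_2)$ is a translate of $\mathrm{bd}(H_1)$, and strict convexity of $C$ means that at each common point the two boundary curves cross transversally in a controlled way; one shows a translate of a strictly convex curve meets the original in at most two points (this is essentially the statement that $\mathrm{bd}(C)$ and $\mathrm{bd}(C)+t$ bound a lens). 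When $\mu\neq 1$, the homothety fixing center $z$ maps $\mathrm{bd}(H_1)$ to $\mathrm{bd}(H_2)$; a common point $p$ satisfies $p = \mu(p-z)+z+t'$, and strict convexity should again bound the number of solutions by two — the cleanest route is to use supporting lines: at a common boundary point $p$ of $H_1\cap H_2$, and use that $H_1\cap H_2$ is convex so its boundary has a well-defined cyclic order, while each of $\mathrm{bd}(H_1)$, $\mathrm{bd}(H_2)$ can contribute only a single arc (if it contributed two arcs, the complementary arc of the other body would have to "re-enter," contradicting convexity of that body combined with strict convexity). So $\mathrm{bd}(H_1\cap H_2)$ consists of exactly one arc of each, meeting at exactly two endpoints.

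**Anticipated main difficulty.** The subtle point is ruling out tangential contact and higher-order coincidences: a priori $\mathrm{bd}(H_1)$ and $\mathrm{bd}(H_2)$ could share an arc, or touch without crossing. Strict convexity of $C$ is exactly what excludes a shared arc between two distinct homothets (two homothets of a strictly convex domain cannot share a boundary segment — this needs a clean lemma, possibly using that a chord of $C$ lies in $\inte C$ except at endpoints). Isolated tangential contact is excluded by the reducedness/properness hypothesis: a point of tangency that is not a crossing would be a singleton component of $\mathrm{bd}(H_1)\cap\mathrm{bd}(H_2)$ that does not affect $H_1\cap H_2$, but one checks such contact is incompatible with $H_1\cap H_2$ being a proper (reduced, full-dimensional) intersection. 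Making this dichotomy — crossing versus touching — fully rigorous, and handling the scaled case $\mu\neq1$ uniformly with the translative case, is where I would spend the most care.
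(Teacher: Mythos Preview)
Your plan has the right overall shape---lower bound via the boundary of the intersection being a Jordan curve that must switch between the two boundaries, and upper bound via the case split $\mu=1$ versus $\mu\neq 1$---but the argument you settle on for the upper bound has a genuine gap.

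Your ``cleanest route'' claims that each of $\mathrm{bd}(H_1)$, $\mathrm{bd}(H_2)$ can contribute only a single arc to $\mathrm{bd}(H_1\cap H_2)$, with the justification that otherwise the other boundary would have to ``re-enter, contradicting convexity of that body combined with strict convexity.'' But this argument, as stated, makes no use of the fact that $H_1$ and $H_2$ are homothets of the \emph{same} body $C$. And without that hypothesis the conclusion is false: a circle and a sufficiently eccentric ellipse are both strictly convex, their intersection is proper, and their boundaries meet in four points, so each contributes two arcs to the boundary of the intersection. Nothing about convexity or strict convexity alone prevents this. Your discussion of tangential contact and shared arcs is fine, but it does not address this transversal four-point scenario.

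The paper's proof uses the homothety in an essential and concrete way. Assuming three common boundary points $q,r,s$ exist, in the scaled case one places the origin at the homothety center so that $H_2=\lambda H_1$, and then $\lambda q,\lambda r,\lambda s$ are three \emph{additional} points on $\mathrm{bd}(H_2)$. Since the origin lies outside $H_1$, one of $q,r,s$ lies in the positive hull of the other two, say $s=\mu_1 q+\mu_2 r$; a short case analysis on whether $\mu_1+\mu_2$ is less than or greater than $1$ then forces one of the six points into the interior of the convex hull of the others, contradicting that all six lie on $\mathrm{bd}(H_2)$. The translate case is handled by a parallel argument using the six points $q,r,s,q+x,r+x,s+x$. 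This is the missing idea: you need to exploit that the homothety manufactures extra boundary points of $H_2$ from the assumed three, and then strict convexity of $H_2$ does the rest.
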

\begin{proof} Let $H_1 = x_1 + \lambda_1 C$ and $H_2 = x_2 + \lambda_2C$, first we consider the case when $\lambda_1 \neq \lambda_2$. In this case we can, without loss of generality, choose an origin, $\oo\in \mathbb{E}^2$, such that $H_2 = \lambda H_1$ for some $\lambda > 1$. Since $H_1\cap H_2$ is a proper intersection, $\oo$ must be outside $H_1$, and $ \mathrm{bd}(H_1) \cap  \mathrm{bd}(H_2)$ must contain at least two points. If the intersection of the boundary of the two homothets has cardinality greater than two, then we have distinct $q,r,s\in  \mathrm{bd}(H_1)\cap  \mathrm{bd}(H_2)$; notice that these points can't be colinear as they are boundary points of a strictly convex domain. Naturally, $\lambda q,\lambda r, \lambda s\in \mathrm{bd}(\lambda H_1) = \mathrm{bd}(H_2)$.

Since $\oo \notin H_1$, one of the points $q, r, s$ must belong to the positive hull of the other two. Without loss of generality, suppose $s = \mu_1 q + \mu_2 r$ where $\mu_1, \mu_2 \geq 0$. In fact $\mu_1,\mu_2 >0$, suppose for example $\mu_2=0$, then we would have the points $q, \mu_1q, \lambda q, \lambda \mu_1 q$ all lie on a straight line and also in $\mathrm{bd} (H_2)$, a contradiction since at least three out of these four points must be distinct and $H_1\cap H_2$ is strictly convex. Also note that $\mu_1 + \mu_2 \neq 1$, since $q,r,s$ aren't colinear. Then one can see that if $\mu_1 + \mu_2 < 1$, then $\lambda s \in  \text{int}(\conv \left\{q,r,s,\lambda q, \lambda r \right\})$ and cannot lie in $\mathrm{bd}(H_2)$. If in turn $\mu_1 + \mu_2 > 1$, then $s$ lies in the interior of the convex hull of the other five points, and cannot be in $\mathrm{bd}( H_2)$.

For the other case suppose $\lambda_1 = \lambda_2$, and let $H_2 = H_1 + x$. 
Let $y$ be a normal vector to $x$. The inner product values $\langle q,y \rangle, \langle r, y \rangle, \langle s, y \rangle$ are all distinct, otherwise four of the points $q,r,s,q+x,r+x,s+x$ would lie on a same line and in the boundary of $H_1$. Without loss of generality, suppose $\langle q,y \rangle<\langle r, y \rangle<\langle s, y \rangle$. Then $r = \mu_1 q + \mu_2 s + k x$ for some $k \neq 0$ and some $\mu_1, \mu_2 \in (0,1)$ such that $\mu_1 + \mu_2 =1$. If $k < 0$, then $r + x \in  \text{int}(\conv \left\{r, q, q+x, s, s+x \right\})$, and if $k > 0$, then $r \in  \text{int}(\conv \left\{r+x, q, q+x, s, s+x \right\})$.
\end{proof}
We can immediately see that the two points in $\mathrm{bd}(H_1)\cap \mathrm{bd}(H_2)$, described in Lemma \ref{PL}, are singular points in the boundary of $H_1\cap H_2$. If they weren't singular then the Gauss image of one intersection point, $x\in \mathrm{bd}(H_1)\cap \mathrm{bd}(H_2)$, with respect to the two homothets, will be the same and a singleton unit vector. Since these homothets are strictly convex their Gauss images will be injective, and hence the inverse images of the point $x$ from the two homothets to $C$ will be the same. This implies that the homothets are subsets of each other a contradiction to the fact their intersection was assume to be proper.

Turning onto the implications Lemma \ref{PL} has to $C$-polygons, first we can immediately see that any $C$-polygon will have a finite amount of vertices. As a vertex is a boundary point that is in the boundary of two generating homothets, and so the set of vertices is a subset of the set of all boundary intersections between each pair of homothets. Which if we have $n$ generating homothets we have at most $2\binom{n}{2}$ many boundary intersections between all the pairs of generating homothets implying $|Vert(H)|\leq 2\binom{n}{2}$. Since we are in the plane, this also implies the number of edges a $C$-polygon has is finite.

We can also notice that if we have a boundary point $x\in \mathrm{bd}(H)$, then it must be in the boundary of some generating homothet $H_j$. This means we can classify boundary points of $H$ into two cases, the boundary points that are in the boundary of only one generating homothet, and boundary points that are in the boundaries of multiple generating homothets. We will soon see that the latter will be vertices and the former will be elements in the relative interior of edges. The next two lemmas establish that vertices are indeed singular points and edges are smooth boundary arcs of $H$ between vertices.

\begin{lemma} \label{Lvert}
Let $H$ be a $C$-polygon where $C$ is a smooth strictly convex domain, then a point $v\in \mathrm{bd}(H)$ is a vertex if and only if it's a singular point of $H$.
\end{lemma}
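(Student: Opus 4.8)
The plan is to prove the two implications separately, using the Gauss map of $C$ (which is bijective since $C$ is a smooth strictly convex domain) to track supporting lines. Throughout, a point $x \in \mathrm{bd}(H)$ lies in $\mathrm{bd}(H_i)$ for at least one $i$, and since each $H_i$ is smooth, $H_i$ has a unique supporting line at $x$ with a well-defined outer normal $u_i(x) \in \mathbb{S}^1$; moreover the intersection $H$ is supported at $x$ by the supporting line of each $H_i$ passing through $x$. So the set of outer normals of $H$ at $x$ is precisely the closed arc(s) of $\mathbb{S}^1$ spanned by $\{u_i(x) : x \in \mathrm{bd}(H_i)\}$, and $x$ is singular exactly when this set is not a single point.

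For the easy direction, suppose $x$ is \emph{not} a vertex, so $x \in \mathrm{bd}(H_j)$ for exactly one index $j$. Then the only supporting line of $H$ at $x$ is that of $H_j$, the Gauss image of $H$ at $x$ is the single normal $u_j(x)$, and $x$ is smooth. Contrapositively, every singular point is a vertex. For the converse, suppose $x$ is a vertex, lying in $\mathrm{bd}(H_j)\cap\mathrm{bd}(H_k)$ for distinct $j,k$. I claim $u_j(x) \neq u_k(x)$; if they were equal, then since $H_j$ and $H_k$ are homothets of the strictly convex smooth $C$ with a common boundary point and a common outer normal there, the Gauss-map argument already given in the paragraph after Lemma \ref{PL} (pulling back the common normal direction via the injective Gauss maps) forces one homothet to contain the other, contradicting that the intersection defining $H$ is proper (hence reduced). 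With $u_j(x)\neq u_k(x)$, the set of outer normals of $H$ at $x$ contains two distinct directions, so it is a nondegenerate arc and $x$ is singular.

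The one point requiring a little care — and the main obstacle — is justifying that the outer normals of $H$ at a vertex $x$ really do fill the \emph{entire} arc between $u_j(x)$ and $u_k(x)$ rather than just being those two directions, and conversely that at a non-vertex there is genuinely a unique supporting line; both amount to checking that a line through $x$ with normal $u$ supports $H$ if and only if it supports every $H_i$ with $x\in\mathrm{bd}(H_i)$. The "only if" is immediate from $H \subseteq H_i$; for the "if" direction, a line supporting every such $H_i$ at $x$ has all of $H = \bigcap_i H_i$ on one side, since any $H_i$ not containing $x$ on its boundary has $x \in \mathrm{int}(H_i)$ and thus imposes no local constraint. This gives the precise description of the normal cone of $H$ at $x$ as the convex hull of $\{u_i(x) : x\in\mathrm{bd}(H_i)\}$, from which both implications follow as above.
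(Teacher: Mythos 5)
Your forward direction (vertex $\Rightarrow$ singular) is correct and is essentially the paper's argument: every outer normal of $H_i$ at $x$ is an outer normal of $H$ at $x$ because $H\subseteq H_i$, and the two normals $u_j(x), u_k(x)$ at a point of $\mathrm{bd}(H_j)\cap\mathrm{bd}(H_k)$ must be distinct by the Gauss-map-injectivity/reducedness argument. The trouble is in the converse, and precisely in the ``iff'' you lean on at the end. The claim that a line through $x$ with normal $u$ supports $H$ \emph{if and only if} it supports every $H_i$ with $x\in\mathrm{bd}(H_i)$ is false: at a vertex $x\in\mathrm{bd}(H_j)\cap\mathrm{bd}(H_k)$ the normal cone of $H$ is the whole arc between $u_j(x)$ and $u_k(x)$, and a direction strictly between them supports $H$ but supports neither $H_j$ nor $H_k$, since each smooth $H_i$ has exactly one supporting line at $x$. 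Moreover, the direction you actually need for the converse --- that a line supporting $H$ at a non-vertex $x$ must support $H_j$ --- is the one you call ``immediate from $H\subseteq H_i$,'' but that inclusion yields only the reverse implication (a supporting line of the larger body $H_i$ at a common boundary point supports the smaller body $H$). Note also that your stated equivalence, if true, would describe the normal cone of $H$ at $x$ as the \emph{intersection} of the singletons $\{u_i(x)\}$, not their convex hull, so the final sentence does not follow from it. In short, the step you yourself flag as the main obstacle is exactly the step that is not justified.

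The repair is the local argument the paper uses (and which you gesture at with ``imposes no local constraint''): if $x$ lies on $\mathrm{bd}(H_j)$ for exactly one $j$, then $x\in\mathrm{int}(H_i)$ for all $i\neq j$, so some neighborhood $U$ of $x$ satisfies $U\cap H=U\cap H_j$; since the supporting lines of a convex body at a boundary point are determined by any neighborhood of that point, the normal cone of $H$ at $x$ equals that of $H_j$ at $x$, which is the single direction $u_j(x)$ by smoothness of $H_j$. With that substitution the converse closes. Finally, observe that the forward direction never needs the normals of $H$ to fill the entire arc between $u_j(x)$ and $u_k(x)$: two distinct outer normals already make $x$ singular, so the contested normal-cone description can be dropped from the proof altogether.
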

\begin{proof}
Suppose $v\in \mathrm{bd}(H)$ is a vertex, this means it is in the boundary intersection of two generating homothets call them $H_i$ and $H_j$. By construction we have that $H\subseteq H_i\cap H_j$ and that $v$ is a boundary element of both sets. From this we obtain that $\Gamma_{H_i\cap H_j}(v)\subseteq \Gamma_{H}(v)$, as supporting  lines of $H_i\cap H_j$ will support subsets of $H_i\cap H_j$ that they intersect with. We established $\Gamma_{H_i\cap H_j}(v)$ is not a singleton point and hence $\Gamma_{H}(v)$ will also not be a singleton, showing that vertices are singular points of $H$.

Conversely suppose $v\in \mathrm{bd}(H)$ is a singular point, then as it's a boundary point it must be either a boundary element of one generating homothet or multiple. If it's the latter we are done, so suppose this singular boundary point was only in the boundary of one generating homothet $H_j$, and in the interior of all the other generating homothets. Then the boundary points of $H$ arbitrarily close to this point must also be only contained in the boundary of $H_j$, implying $H_j$ is not smooth, a contradiction.

\end{proof}

\begin{lemma} \label{Ledge}
Edges are smooth boundary arcs between vertices and vice versa.
\end{lemma}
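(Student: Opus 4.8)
The plan is to route both directions through a single local fact about non-vertex boundary points. \textbf{Local fact:} if $x\in\mathrm{bd}(H)$ is not a vertex, then $x$ lies on $\mathrm{bd}(H_j)$ for exactly one generating homothet $H_j$ and in $\text{int}(H_i)$ for every $i\neq j$; consequently, shrinking to a small Euclidean ball $W$ around $x$ with $W\subseteq\bigcap_{i\neq j}\text{int}(H_i)$, one checks directly that $W\cap\mathrm{bd}(H)=W\cap\mathrm{bd}(H_j)$ and that this set contains no vertex. (For ``$W\cap\mathrm{bd}(H_j)\subseteq\mathrm{bd}(H)$'': a point $y\in W\cap\mathrm{bd}(H_j)$ lies in every $H_i$, hence in $H$, but not in $\text{int}(H_j)\supseteq\text{int}(H)$; for the reverse inclusion, a point of $W\cap\mathrm{bd}(H)$ lies on some $\mathrm{bd}(H_k)$, and $k\neq j$ is impossible since $W\subseteq\text{int}(H_k)$.) In particular the assignment sending a non-vertex boundary point to the unique index of a homothet whose boundary contains it is locally constant on the relatively open set $\mathrm{bd}(H)\setminus Vert(H)$; denote it $x\mapsto j(x)$.

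First I would prove that every edge is a smooth boundary arc of $H$ with vertex endpoints and no vertex in its relative interior. Let $e$ be an edge of some $\mathcal{E}_j$. Then $e$ is a connected subarc of the smooth curve $\mathrm{bd}(H_j)$ and $e\subseteq\mathrm{bd}(H)$ by Definition \ref{D3}, so $e$ is a smooth boundary arc of $H$; by Lemma \ref{Lvert} it remains to locate its vertices. If a relative interior point $x$ of $e$ also lay on $\mathrm{bd}(H_k)$ with $k\neq j$, then by Lemma \ref{PL} we have $\mathrm{bd}(H_j)\cap\mathrm{bd}(H_k)=\{p,q\}$, and these two points split $\mathrm{bd}(H_j)$ into a closed subarc contained in $H_k$ and a closed subarc meeting $H_k$ only in $\{p,q\}$; since $e\subseteq\mathrm{bd}(H)\subseteq H_k$, all of $e$ lies in the first subarc, whose endpoints are $p$ and $q$, so $x\in\{p,q\}$ cannot be interior to $e$ — a contradiction. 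Hence no relative interior point of $e$ is a vertex. For an endpoint $x$ of $e$: if $x$ were not a vertex, the local fact supplies a relative neighbourhood of $x$ in $\mathrm{bd}(H)$ lying inside $\mathrm{bd}(H_j)$, hence inside $\mathrm{bd}(H)\cap\mathrm{bd}(H_j)$; joining it to $e$ and invoking maximality of $e$ shows $x$ lies in the relative interior of $e$, contradicting that $x$ is an endpoint. So both endpoints of $e$ are vertices.

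Conversely, since $Vert(H)$ is finite the vertices cut $\mathrm{bd}(H)$ into finitely many closed arcs, each running between two consecutive (necessarily distinct) vertices with no vertex in its relative interior; I would show each such arc $\gamma$, say with endpoints $v_1,v_2\in Vert(H)$, is an edge. The relative interior of $\gamma$ is a connected subset of $\mathrm{bd}(H)\setminus Vert(H)$, so $j(\cdot)$ is constant there, say equal to $j$; thus the relative interior of $\gamma$ is contained in $\mathrm{bd}(H_j)$ and, taking closures, $\gamma\subseteq\mathrm{bd}(H)\cap\mathrm{bd}(H_j)$. As $\gamma$ is a non-degenerate connected arc it is contained in a unique maximal connected arc $e$ of $\mathrm{bd}(H)\cap\mathrm{bd}(H_j)$, which is not a singleton and hence is an edge of $\mathcal{E}_j$. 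By the previous paragraph $e$ has vertex endpoints and no vertex in its relative interior, while $\gamma\subseteq e$ contains the vertices $v_1,v_2$; therefore $e$ cannot extend strictly past either $v_i$ (that would place a vertex in the relative interior of $e$), so $e=\gamma$, and $\gamma$ is an edge.

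The step I expect to be the main obstacle is the local fact itself — in particular, justifying carefully that near a non-vertex boundary point the boundary of $H$ and the boundary of the single relevant homothet genuinely coincide — together with the bookkeeping needed to exclude degenerate configurations (for instance $H$ having too few vertices for ``arcs between consecutive vertices'' to be meaningful, which should be ruled out using properness of the intersection, cf.\ Lemma \ref{PL}); the remaining arguments are routine applications of Lemmas \ref{PL} and \ref{Lvert} together with the connectedness of boundary arcs.
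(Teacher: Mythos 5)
Your proof is correct and follows essentially the same route as the paper's: maximality of the connected component forces the endpoints of an edge to be vertices, and the observation that a non-vertex boundary point lies on the boundary of exactly one generating homothet, combined with connectedness, shows each boundary arc between consecutive vertices belongs to a single homothet and is therefore an edge. Your write-up is more carefully executed than the paper's --- the locally constant labelling $j(\cdot)$ makes precise the paper's informal ``no transition point'' argument, and you additionally verify (via Lemma \ref{PL}) that relative interiors of edges contain no vertices and that the degenerate cases of too few vertices cannot occur --- but the underlying ideas coincide.
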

\begin{proof}
For the forward implication consider an edge in an edge family, this is a non-singleton maximally connected boundary arc of some homothet that is contain in $H$. Since this arc is in the boundary of one homothet it must be in the boundary of $H$, this arc is also smooth as $C$ is smooth and it's end point must be vertices otherwise it would not be maximally connected. 
For the reverse implication consider an arbitrary smooth boundary arc between two vertices $v_1,v_2\in Vert(H)$, and an arbitrary point, $x$, in this arc. It is a smooth boundary point by assumption which implies $x$ must only be contained in the boundary of one homothet, $H_j$. Notice that since the arc is smooth, if one point in this smooth arc is in the boundary of only one homothet, the entire smooth arc must be as well. If there was a transition to another generating homothet along this arc, we must have a transition point. This is because boundaries are closed, and so their intersection must have an overlapping point, and that point would be a vertex and thus singular. So the entire arc belongs to only one homothet, this arc is also contain in the boundary of $H$ and is maximally connected and is not a singleton point which means it's an edge in the edge family $E_j$.

\end{proof}

\section{Proof of Theorem \ref{T1}, the case when \textit{m = 0}}\label{S3}

There are two statements to prove in Theorem \ref{T1}, we will first prove the $C$-polygon statement and then the translative $C$-polygon statement. Please note throughout this entire section $C$ will be a smooth strictly convex domain.
\subsection{The \textit{C}-polygon case.}\label{SS3.1}
Let $H=\cap_{i=1}^n H_i$ be a $C$-polygon, we need to show that $Vert(H)$ has cardinality between $n$ and $2(n-1)$. The lower bound is easily handled with an upcoming observation, so we will focus on the upper bound.
We will prove this bound by induction on $n$, the number of generating homothets. We start with the base case where $n=2$, we aim to show that $|Vert(H)|\leq 2=2(n-1)$. We have already seen in Lemma \ref{PL}, that $|Vert(H)|=|Vert(H_1\cap H_2)|=|\mathrm{bd}(H_1)\cap \mathrm{bd}(H_2)|=2$ completing the base case.

Our inductive hypothesis is that any $C$-polygon made up of $n$ generating homothets will have at most $2(n-1)$ vertices. We aim to show that any $C$-polygon made up of $n+1$ generating homothets will have at most $2n$ vertices. To that end let $H=\cap_{i=1}^{n+1} H_i$ be a $C$-polygon made up of $n+1$ generating homothets. 

The object we will be applying our inductive hypothesis on is denoted and defined as $W_j=\bigcap_{i=1,i\neq j}^{n+1}H_i$. The notation $W_j$ is meant to invoke that $H$ is without the $j$'th homothet in the intersection creating it. This is in fact a $C$-polygon as $n\geq 2$, it has non-empty interior as it's a super-set of $H$ and is a reduced intersection by the following lemma.
\begin{lemma} \label{L5}
If $U_i\subseteq \mathbb{E}^d$ for all $i\in \{1,2,\dots, n\}$, and $\cap_{i=1}^n U_i$ is reduced and $n\geq 2$, then for all $j\in \{1,2,\dots,n\}$, $\cap_{i=1,i\neq j}^nU_i$ is reduced. 
\end{lemma}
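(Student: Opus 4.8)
The plan is to unwind the definition of a reduced intersection directly and, for each index that may be dropped, produce an explicit witness point. Fix $j\in\{1,2,\dots,n\}$ and write $V=\bigcap_{i\neq j}U_i$; this is an intersection of the $n-1\geq 1$ sets $\{U_i:i\neq j\}$, and I must show that dropping any one further index $k\neq j$ strictly enlarges it, i.e.\ $V\subsetneq \bigcap_{i\notin\{j,k\}}U_i$ for every $k\in\{1,\dots,n\}\setminus\{j\}$.

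The key step exploits reducedness of $\bigcap_{i=1}^n U_i$ at the index $k$: there exists a point $p\in\bigcap_{i\neq k}U_i$ with $p\notin\bigcap_{i=1}^n U_i$. Since $p$ lies in every $U_i$ with $i\neq k$ but not in all $n$ of them, it must be that $p\notin U_k$. This single point does the job: on one hand $p\in\bigcap_{i\notin\{j,k\}}U_i$ because $\{i:i\notin\{j,k\}\}\subseteq\{i:i\neq k\}$; on the other hand $p\notin V$, because $k\neq j$ means $U_k$ is one of the sets intersected to form $V$ and $p\notin U_k$. Hence $V\subsetneq\bigcap_{i\notin\{j,k\}}U_i$. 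As $k$ was an arbitrary index distinct from $j$, $V$ is a reduced intersection, which is exactly the assertion of the lemma.

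The hypothesis $n\geq 2$ is needed only so that $\bigcap_{i\neq j}U_i$ is a non-vacuous intersection for which ``reduced'' is meaningful; in the extreme case $n=2$ one reads the empty intersection as the ambient space $\mathbb{E}^d$, and the same witness $p$ shows $U_k\subsetneq\mathbb{E}^d$. I do not expect any genuine obstacle here — the statement is pure set-theoretic bookkeeping about intersections, and the only point needing a moment's care is that the chosen witness $p$ is forced to miss $U_k$ (rather than merely some other $U_i$) precisely because it was selected to lie in all of the other sets.
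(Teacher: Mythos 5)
Your proof is correct and follows essentially the same route as the paper's: both reduce the claim to showing that dropping a further index $k\neq j$ strictly enlarges $\bigcap_{i\neq j}U_i$, using the reducedness of the full intersection at the index $k$. The only difference is presentational — you exhibit an explicit witness point $p\notin U_k$, while the paper argues by contradiction with the set identity $W_{\{j,k\}}\cap U_k = W_j$ — and your handling of the $n=2$ edge case via the empty-intersection convention is a welcome extra touch of care.
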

\begin{proof}

Let $U = U_1 \cap U_2 \cap \dots \cap U_n$, and $S \subseteq \left\{1, \dots, n\right\}$. We denote $$W_S = \bigcap_{i\in \left\{1, \dots, n\right\} \setminus S} U_i,$$ as defined before, $W_{\{j\}}=W_j$. Note that for any $i \in  \{1,2, \dots, n\}$ we have $U = W_i \cap U_i$.

By assumption, $U$ is reduced, which is equivalent to $U \neq W_i$ for any $i \in \{1,2, \dots, n\}$. To show that $W_i$ is also reduced for any $i \in \{1,2, \dots, n\}$, we need to demonstrate that for any distinct $i,j \in \{1,2, \dots, n\} $, we have $W_i \neq W_{\left\{i,j\right\}}$. Indeed if $W_i= W_{\left\{i,j\right\}}$, we would have $U = W_i \cap U_i  = W_{\left\{i,j\right\}} \cap U_i = W_j$. Thus, we contradict the initial assumption that $U$ is reduced.

\end{proof}

An important property of $W_j$ is, if $H_j$ is a generating homothet of a $C$-polygon $H$, then 

\begin{equation}\label{eq1}
\mathrm{bd}(H_j)\cap \mathrm{int}(W_j)\neq \emptyset
\end{equation}

This can easily be proven by contradiction due to the fact $H$ is a proper intersection. Another important observation is that the amount of boundary arcs of $H_j$ that cross the interior of $W_j$ is the amount of edges in the edge family $E_j$. With this observation we can immediately prove the lower bound in Theorem \ref{T1} as each edge family has cardinality at least 1 by equation (\ref{eq1}). Notice that any new vertices found in $H$ that are not in $W_j$ must be in the relative boundary of the edges in the edge family $E_j$. 

\begin{figure}[!ht] \label{F2}
\centering
\begin{subfigure}{.45\textwidth}
  \centering
  \begin{tikzpicture} 
\node [above right, inner sep=0] (image) at (0,0) {\includegraphics[width=0.8\linewidth]{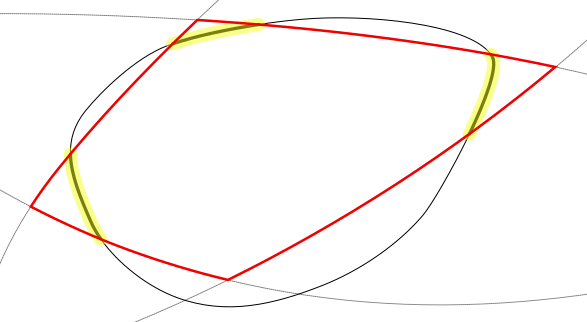}};    
\begin{scope}[
x={($0.1*(image.south east)$)},
y={($0.1*(image.north west)$)}];
        
    \node[left] at (0.5,4){\large \textcolor{red}{$W_j$}};
    \node[left] at (8.5,3){\large $H_j$}; 
       
\end{scope}
\end{tikzpicture}
\end{subfigure}%
\begin{subfigure}{.5\textwidth}
  \centering
    \begin{tikzpicture} 
\node [above right, inner sep=0] (image) at (0,0) {\includegraphics[width=0.9\linewidth]{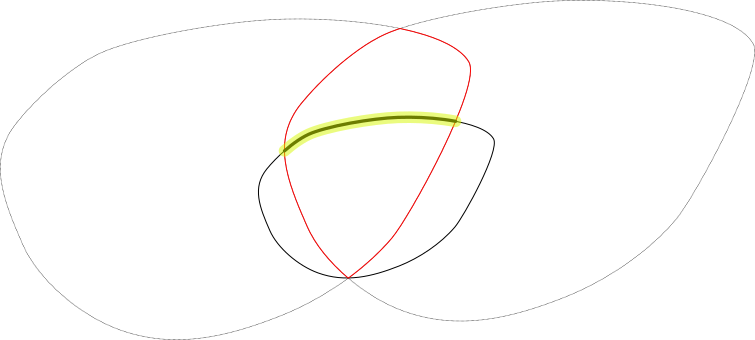}};    
\begin{scope}[
x={($0.1*(image.south east)$)},
y={($0.1*(image.north west)$)}];
       
    \node[above left] at (4,7){\textcolor{red}{$W_j$}};
    \node[left] at (7,3){\large $H_j$}; 
    
\end{scope}
\end{tikzpicture}
\end{subfigure}
`   \caption{Two examples of how $H_j$ can intersect $W_j$.}
\end{figure}
To complete the inductive step we will show that every $C$-polygon will contain a singleton-edge family. This would complete the induction because if we have this, lets call the singleton-edge family $E_k$, then the $\mathrm{bd}(H_k)$ intersects the interior of $W_k$ in one and only one boundary arc. This means that the new vertices added on to $W_k$ by the inclusion of $H_k$ in the intersection to form $H$ will be at most two, with this we obtain:

$$|Vert(H)|\leq |Vert(W_j)|+|Vert(H)\setminus Vert(W_j)|\leq 2(n-1)+2=2n $$

So all we need to show is that every $C$-polygon contains a singleton-edge family. We do this by utilizing a notion of gaps which are defined as follows.
\begin{definition} \label{D5}
Let $H_j$ be a generating homothet of $H$, then consider $\mathrm{bd}(H_j)\setminus  \mathcal{E}_j$ which is a union of $|E_j|$ many maximally connected open boundary arcs of $H_j$. Let us denote these open boundary arcs of our generating homothet by $g_1,g_2,\dots, g_{|E_j|}$. We will call $\conv(\cl(g_k))$ a gap of $H_j$ for $k\in \{1,2,\dots, |E_j|\}$ and we denote and call $G_j=\cup_{i=1}^{|E_j|}\{conv(\cl (g_i))\}$ the gap family of $H_j$.
\end{definition}
\begin{figure}[!ht] \label{F3}
\centering
\begin{subfigure}{.33\textwidth}
  \centering
  \begin{tikzpicture} 
\node [above right, inner sep=0] (image) at (0,0)   {\includegraphics[width=0.8\linewidth]{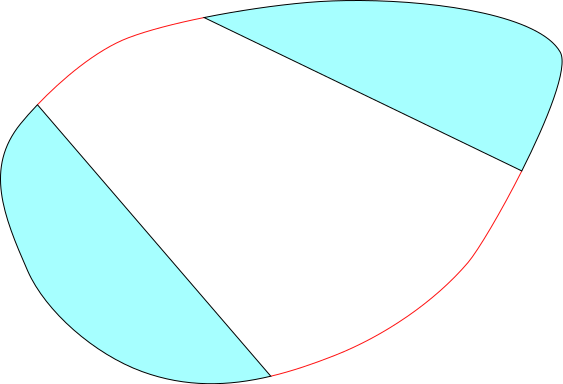}};  
\begin{scope}[
x={($0.1*(image.south east)$)},
y={($0.1*(image.north west)$)}];
        
    
\end{scope}
\end{tikzpicture}
\end{subfigure}%
\begin{subfigure}{.33\textwidth}
  \centering
    \begin{tikzpicture} 
\node [above right, inner sep=0] (image) at (0,0) {\includegraphics[width=0.8\linewidth]{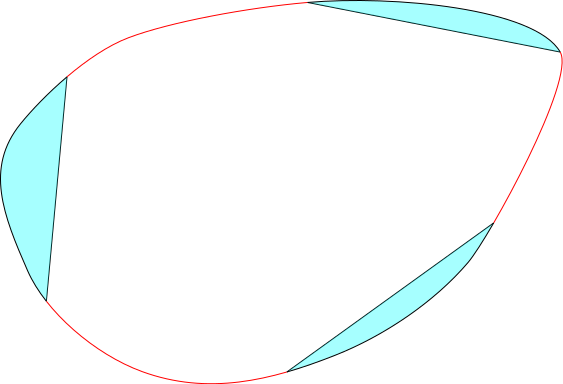}};   
\begin{scope}[
x={($0.1*(image.south east)$)},
y={($0.1*(image.north west)$)}];
       
    
\end{scope}
\end{tikzpicture}
\end{subfigure}
\begin{subfigure}{.33\textwidth}
  \centering
    \begin{tikzpicture} 
\node [above right, inner sep=0] (image) at (0,0) {\includegraphics[width=0.8\linewidth]{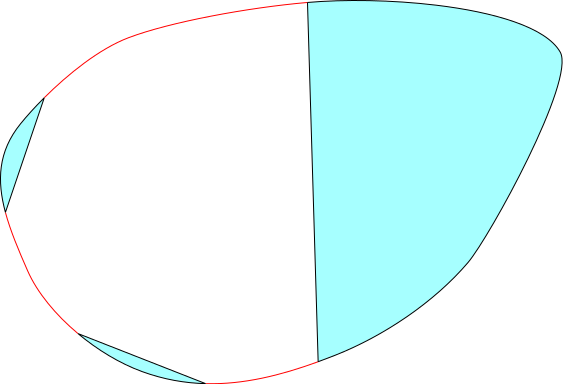}};     
\begin{scope}[
x={($0.1*(image.south east)$)},
y={($0.1*(image.north west)$)}];
       
    
\end{scope}
\end{tikzpicture}
\end{subfigure}
\caption{Three examples of gap families of a homothet.}
\end{figure}
\begin{lemma} \label{L6}
For any $j\in \{1,2,\dots n+1\}$:
$$(\mathrm{bd}(H)\setminus E_j)\subseteq G_j $$
\end{lemma}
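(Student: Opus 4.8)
The plan is to verify the inclusion pointwise. Fix $j$ and, as in Definition \ref{D5}, let $g_1,\dots,g_{|E_j|}$ be the open gap--arcs of $H_j$, so that $\mathrm{bd}(H_j)$ is the disjoint union of $\mathcal E_j$ and $g_1,\dots,g_{|E_j|}$; here I identify $\mathcal E_j=\bigcup_{e\in E_j}e$ with its set of boundary points and read $G_j$ as the family of caps $\conv(\cl(g_i))$, so that the statement $\mathrm{bd}(H)\setminus E_j\subseteq G_j$ means: every point of $\mathrm{bd}(H)$ lying on no edge belongs to some $\conv(\cl(g_i))$. So fix $x\in\mathrm{bd}(H)\setminus\mathcal E_j$; note $x\in H\subseteq H_j$. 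If $x\in\mathrm{bd}(H_j)$ there is nothing to do: then $x\in\mathrm{bd}(H_j)\setminus\mathcal E_j=\bigcup_i g_i$, so $x\in g_i\subseteq\conv(\cl(g_i))$ for some $i$.

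The real case is $x\in\inte(H_j)$, which I would handle by a supporting--line argument. Since $H$ is a strictly convex domain (Section \ref{S2}), choose a supporting line $\ell$ of $H$ at $x$; strict convexity gives $\ell\cap H=\{x\}$. Let $\ell^{-}$ be the closed halfplane bounded by $\ell$ that contains $H$, and $\ell^{+}$ the other. Because $x\in\ell\cap\inte(H_j)$, the line $\ell$ runs through the interior of the strictly convex body $H_j$, so it crosses $\mathrm{bd}(H_j)$ in exactly two points and splits $\mathrm{bd}(H_j)$ into two closed sub--arcs; let $b^{+}:=\mathrm{bd}(H_j)\cap\ell^{+}$ be the one on the $\ell^{+}$ side. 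The key step is that $b^{+}\cap\mathcal E_j=\emptyset$: since $\mathcal E_j\subseteq\mathrm{bd}(H)\subseteq H\subseteq\ell^{-}$ we get $b^{+}\cap\mathcal E_j\subseteq\ell^{+}\cap\ell^{-}\cap\mathcal E_j=\ell\cap\mathcal E_j\subseteq\ell\cap H=\{x\}$, which is empty because $x\in\inte(H_j)$ while $\mathcal E_j\subseteq\mathrm{bd}(H_j)$. Therefore $b^{+}\subseteq\mathrm{bd}(H_j)\setminus\mathcal E_j=\bigcup_i g_i$, and, being connected, $b^{+}$ lies inside a single component $g_i$. Finally, the convex hull of a boundary sub--arc of a convex body is exactly the cap that sub--arc subtends, so $\conv(b^{+})=H_j\cap\ell^{+}$; since $x\in H_j$ and $x\in\ell\subseteq\ell^{+}$ this yields $x\in H_j\cap\ell^{+}=\conv(b^{+})\subseteq\conv(g_i)\subseteq\conv(\cl(g_i))$, finishing the proof.

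The argument is pure planar convexity, so the part to be most careful about is just stating the ingredients cleanly (rather than reading them off the pictures): a line through an interior point of a strictly convex body crosses the boundary in exactly two points and cuts it into two connected arcs; a supporting line of a strictly convex domain touches it at a single point; the convex hull of a boundary arc is the cap it subtends. I would also record --- it is implicit in Definition \ref{D5} and was used above in writing $\mathrm{bd}(H_j)\setminus\mathcal E_j=\bigcup_i g_i$ --- that $\mathcal E_j$ is relatively closed in $\mathrm{bd}(H_j)$, so that its complement really does decompose into open arcs; this is fine because $\mathcal E_j$ is the union of the finitely many (closed) edges, the endpoints of edges being vertices of $H$ and $H$ having finitely many vertices (Section \ref{S2}).
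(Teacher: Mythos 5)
Your proof is correct, but it runs along a genuinely different line from the paper's. The paper argues arc-by-arc: the chord joining the two vertices that bound a gap has its relative interior in $\inte(H)$ (strict convexity of $H$), and $\mathrm{bd}(H)$ cannot leave $H_j$, so the piece of $\mathrm{bd}(H)$ connecting two consecutive edges of $E_j$ is trapped between that chord and the gap arc $g_k$, i.e.\ inside $\conv(\cl(g_k))$. You instead work pointwise with a supporting line $\ell$ of $H$ at $x$: strict convexity gives $\ell\cap H=\{x\}$, so the cap $H_j\cap\ell^{+}$ on the far side of $\ell$ contains $x$ while its bounding arc $b^{+}$ misses $\mathcal{E}_j$ entirely, hence lies in a single gap arc, and $x\in\conv(b^{+})\subseteq\conv(\cl(g_i))$. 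Your route buys rigor: it sidesteps the paper's implicit bookkeeping (that the components of $\mathrm{bd}(H)\setminus\mathcal{E}_j$ correspond to gaps, and the Jordan-type ``trapped between two curves'' step), replacing it with the clean identity $\conv(b^{+})=H_j\cap\ell^{+}$ and a connectedness argument; the paper's sketch is shorter and more visual but leaves those steps to the reader. Both arguments use the same two ingredients --- strict convexity of $H$ and $H\subseteq H_j$ --- and your closing remark about $\mathcal{E}_j$ being relatively closed (so that its complement really is a finite union of open arcs) is a point the paper also glosses over; the only hypothesis you lean on silently is $E_j\neq\emptyset$, which the paper secures just before the lemma via equation~(\ref{eq1}).
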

\begin{proof}
What this lemma says is that the boundary arcs of $H$ connecting the edges generated by a particular homothet, must lie in the gaps of that homothet. This can be proven by observing that the line segment connecting the two vertices bounding a gap must be in the interior of $H$ because $H$ is strictly convex, and the boundary of $H$ cannot go outside of the boundary of $H_j$ since $H\subseteq H_j$.

\end{proof}
\begin{lemma}\label{L7}
If $H_i$ and $H_j$ are two distinct generating homothets of $H$, then the intersection of $\mathrm{bd}(H_i)$ with the gap family of $H_j$ is non-empty and is contained in a single gap of $H_j$.
\end{lemma}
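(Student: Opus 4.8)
The plan is to reduce everything to Lemma~\ref{PL}. First I would observe that $H_i\cap H_j$ is itself a proper intersection: if, say, $H_j\subseteq H_i$, then $\bigcap_{\ell\neq i}H_\ell\subseteq H_j\subseteq H_i$, so deleting $H_i$ from the intersection defining $H$ would not change it, contradicting that $\bigcap_{\ell=1}^{n}H_\ell$ is reduced; thus neither of $H_i,H_j$ contains the other, and since $H\subseteq H_i\cap H_j$ has nonempty interior, $H_i\cap H_j$ is proper. Lemma~\ref{PL} then gives $\mathrm{bd}(H_i)\cap\mathrm{bd}(H_j)=\{p,q\}$ with $p\neq q$. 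Since the simple closed curve $\mathrm{bd}(H_j)$ meets $\mathrm{bd}(H_i)$ only at $p$ and $q$, each of the two open arcs of $\mathrm{bd}(H_j)\setminus\{p,q\}$ is connected and disjoint from $\mathrm{bd}(H_i)$, hence lies entirely in $\mathrm{int}(H_i)$ or entirely in $\mathbb{E}^2\setminus H_i$; I would call them $\alpha\subseteq\mathrm{int}(H_i)$ and $\beta\subseteq\mathbb{E}^2\setminus H_i$. Here $\beta\neq\emptyset$ because $H_j\not\subseteq H_i$ (follow a ray from a point of $\mathrm{int}(H)$ through a point of $\mathrm{int}(H_i)\setminus H_j$ to where it exits $H_j$, staying in $\mathrm{int}(H_i)$), and symmetrically $\alpha\neq\emptyset$, so this dichotomy does give exactly one arc of each type.

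Next I would pin down the relevant gap. Every point of $\beta$ lies outside $H_i$, hence outside $H$, hence is not in $\mathrm{bd}(H)\cap\mathrm{bd}(H_j)\supseteq\mathcal{E}_j$; so $\beta\subseteq\mathrm{bd}(H_j)\setminus\mathcal{E}_j=g_1\cup\dots\cup g_{|E_j|}$. As $\beta$ is connected it lies in a single gap $g_{k_0}$, and consequently $\{p,q\}=\cl(\beta)\setminus\beta\subseteq\cl(g_{k_0})\subseteq\conv(\cl(g_{k_0}))$. Since $p,q\in\mathrm{bd}(H_i)$, this already shows $\mathrm{bd}(H_i)$ meets the gap $\conv(\cl(g_{k_0}))$, giving the non-emptiness half of the statement.

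Then I would show no other gap is met. Fix $k\neq k_0$. The arcs $g_k$ are pairwise disjoint and $\beta\subseteq g_{k_0}$, so $g_k\cap\beta=\emptyset$; also $p,q\notin g_k$, because $g_k$ is open and every neighbourhood of $p$ or of $q$ in $\mathrm{bd}(H_j)$ meets $\beta$. Hence $g_k\subseteq\mathrm{int}(H_i)$. The two endpoints of the arc $g_k$ are limits of points of $g_k$, so they lie in $\cl(\mathrm{int}(H_i))=H_i$; if such an endpoint $z$ lay on $\mathrm{bd}(H_i)$ it would be in $\mathrm{bd}(H_i)\cap\mathrm{bd}(H_j)=\{p,q\}$, say $z=p$, so $p$ would be a common endpoint of the distinct gaps $g_k$ and $g_{k_0}$; then the two sub-arcs of $\mathrm{bd}(H_j)$ abutting $p$ both lie in $\mathrm{bd}(H_j)\setminus\mathcal{E}_j$, which forces $p\notin\mathcal{E}_j$ and makes $g_k\cup\{p\}\cup g_{k_0}$ a connected subset of $\mathrm{bd}(H_j)\setminus\mathcal{E}_j$ strictly containing $g_k$, contradicting the maximality in Definition~\ref{D5}. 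Hence the endpoints of $g_k$ lie in $\mathrm{int}(H_i)$, so $\cl(g_k)\subseteq\mathrm{int}(H_i)$, and since $\mathrm{int}(H_i)$ is convex, $\conv(\cl(g_k))\subseteq\mathrm{int}(H_i)$, which is disjoint from $\mathrm{bd}(H_i)$. Combining the two paragraphs, $\mathrm{bd}(H_i)\cap\bigcup_{g\in G_j}g$ is nonempty and is contained in the single gap $\conv(\cl(g_{k_0}))$.

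The routine facts I would check but not dwell on are that each $g_k$ is a proper sub-arc with two genuine endpoints (true since $E_j\neq\emptyset$ by~\eqref{eq1}, so $\mathcal{E}_j\neq\emptyset$) and that $\alpha,\beta$ are each a single arc rather than a union of several (forced by there being only the two crossing points $p,q$). The main obstacle is the last paragraph, and within it the one genuinely delicate point is ruling out that $p$ or $q$ is an endpoint of some gap other than $g_{k_0}$; everything else is bookkeeping about arcs on the circle $\mathrm{bd}(H_j)$.
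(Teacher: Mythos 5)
Your proof is correct, and it reaches the conclusion by a different decomposition than the paper, even though both arguments are ultimately powered by the same key fact, Lemma~\ref{PL}. The paper tracks the curve $\mathrm{bd}(H_i)$: it observes that $\mathrm{bd}(H_i)$ can only enter and leave a gap through $\mathrm{bd}(H_j)$ (since the chord of a gap lies in $H\subseteq H_i$), so each gap visited consumes two of the only two points of $\mathrm{bd}(H_i)\cap\mathrm{bd}(H_j)$, and reducedness forces at least one visit. You instead track the curve $\mathrm{bd}(H_j)$: you split it at $p,q$ into the arc $\alpha\subseteq\mathrm{int}(H_i)$ and the arc $\beta$ outside $H_i$, locate $\beta$ inside a single gap arc $g_{k_0}$, and then show every other gap region is swallowed whole by $\mathrm{int}(H_i)$, hence misses $\mathrm{bd}(H_i)$. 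What your route buys is rigor at exactly the two places the paper's crossing count is informal: non-emptiness falls out for free (since $p,q\in\cl(g_{k_0})\cap\mathrm{bd}(H_i)$, rather than an appeal to reducedness), and you verify that the full two-dimensional gaps $\conv(\cl(g_k))$, $k\neq k_0$ --- not merely the arcs $g_k$ --- avoid $\mathrm{bd}(H_i)$, via convexity of $\mathrm{int}(H_i)$; you also correctly isolate and resolve the genuinely delicate point that $p,q$ cannot be endpoints of a second gap (by maximality of the components in Definition~\ref{D5}). The only blemishes are cosmetic: the parenthetical ray argument you attach to ``$\beta\neq\emptyset$'' actually produces a point of $\alpha$ (the clean argument for $\beta\neq\emptyset$ is that $\mathrm{bd}(H_j)\subseteq H_i$ would give $H_j=\conv(\mathrm{bd}(H_j))\subseteq H_i$), and in the endpoint analysis you should also dispose of the case $p\in g_{k_0}$ itself (immediate, since $g_{k_0}$ open and meeting the distinct component $g_k$ near $p$ is absurd); neither affects correctness.
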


\begin{proof}
$H_i$ must contain the edge family $E_j$, and so when the $\mathrm{bd}(H_i)$ intersects a gap, its boundary must enter and exit the gap from the boundary of $H_j$. The homothets cannot be tangent to each other as that would contradict Lemma \ref{PL}, so when the $\mathrm{bd}(H_i)$ intersects a gap it pierces the interior of it, and the $\mathrm{bd}(H_i)$ must intersect at least one gap in order for the intersection to be reduced. So every gap $\mathrm{bd}(H_i)$ intersects generates two intersection points in $\mathrm{bd}(H_i)\cap \mathrm{bd}(H_j)$, but of course Lemma $\ref{PL}$ implies this is at most two proving the lemma.

\begin{figure}[!ht] \label{F4}
\centering

  \begin{tikzpicture} 
\node [above right, inner sep=0] (image) at (0,0) 
{\includegraphics[scale=0.2]{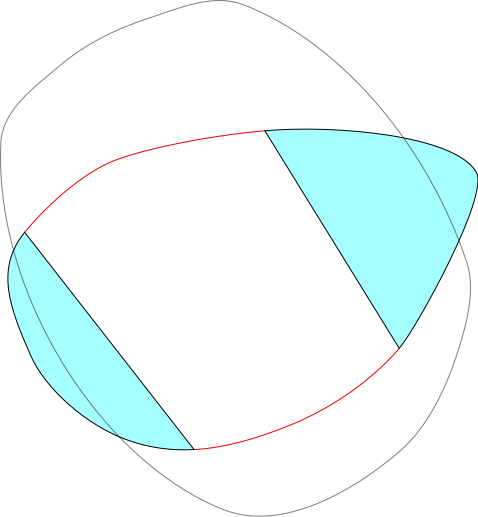}};     
\begin{scope}[
x={($0.1*(image.south east)$)},
y={($0.1*(image.north west)$)}];
        
    
\end{scope}
\end{tikzpicture}

\caption{Example of what would happen if a homothet intersected multiple gaps.}
\end{figure}
\end{proof}
As a result of the previous 2 lemmas we see the following.
\begin{corollary} \label{C8}
No two edges in the same edge family may lie in different gaps of a gap family.
\end{corollary}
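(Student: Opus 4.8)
The plan is to derive Corollary \ref{C8} as an essentially immediate consequence of Lemma \ref{L6} and Lemma \ref{L7}, together with the correspondence between edges of $H$ lying in $\mathcal{E}_j$ and the boundary arcs of $H_j$ crossing $\inter(W_j)$. The key point is that every edge in the edge family $E_j$ is ``bordered'' by gaps of the gap family $G_j$, and each such edge arises from the boundary of some \emph{other} generating homothet entering and leaving $H_j$.

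First I would fix an edge family $E_j$ and suppose, for contradiction, that it contains two distinct edges $e_1$ and $e_2$ lying in different gaps of $G_j$ — that is, the relative boundary points of $e_1$ and $e_2$ are separated, along $\mathrm{bd}(H_j)$, by distinct gaps. Each edge of $E_j$ is an arc of $\mathrm{bd}(H_j)$ contained in $\mathrm{bd}(H)$, and its endpoints are vertices of $H$; by Definition \ref{D4} each such endpoint lies on the boundary of some other generating homothet. The second step is to recall from Lemma \ref{L6} that the boundary arcs of $H$ joining consecutive edges of $E_j$ lie inside $G_j$, so the endpoints of $e_1$ and $e_2$ are exactly the points where $\mathrm{bd}(H)$ (hence the boundary of some other $H_i$) transitions between $\mathrm{bd}(H_j)$ and the interior of a gap of $H_j$.

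The third and decisive step is to invoke Lemma \ref{L7}: for any single other generating homothet $H_i$, the set $\mathrm{bd}(H_i) \cap G_j$ is nonempty and is contained in one gap of $H_j$. I would argue that the two edges $e_1$ and $e_2$, lying in $\mathcal{E}_j$, must each be flanked by an arc of $\mathrm{bd}(H_i)$ for some $i \ne j$ that dips into a gap of $H_j$ — but if $e_1$ and $e_2$ are in genuinely different gaps then, tracing around $\mathrm{bd}(H)$, some $H_i$ would have to leave $H_j$ into one gap and later into a second, distinct gap (or two edges of $E_j$ separated by two distinct gaps would force two separate homothets each of which, by Lemma \ref{PL}, meets $\mathrm{bd}(H_j)$ in only two points, and a counting of transitions shows this contradicts the reducedness used in Lemma \ref{L7}). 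Either way we contradict the ``contained in a single gap'' conclusion of Lemma \ref{L7}.

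I expect the main obstacle to be bookkeeping the \emph{cyclic} structure of $\mathrm{bd}(H)$ cleanly: precisely pinning down that ``two edges of $E_j$ in different gaps'' forces some other homothet's boundary to cross into two distinct gaps of $H_j$, rather than merely two edges being contributed by two different homothets (which would not yet be a contradiction without care). The cleanest route is probably to observe that between two edges of $\mathcal{E}_j$ there is, by Lemma \ref{L6}, an arc of $\mathrm{bd}(H)$ inside a gap $g$ of $H_j$, and this arc belongs to $\mathrm{bd}(H_i)$ for a single $i$ (by Lemma \ref{Ledge}, a smooth sub-arc lies on one homothet); that same $H_i$ then has $\mathrm{bd}(H_i)$ entering $g$, so its intersection with $G_j$ lies in $g$ by Lemma \ref{L7}, which forces the neighbouring edges of $\mathcal{E}_j$ to be the two arcs of $\mathrm{bd}(H_j)$ adjacent to $g$ — i.e. all edges of $E_j$ sit around the \emph{same} gap, giving the corollary.
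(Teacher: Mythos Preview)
You have misread the corollary. The phrase ``the same edge family'' and ``a gap family'' refer to \emph{different} generating homothets: the statement is that two edges of $E_i$ cannot lie in different gaps of $G_j$ for $j\neq i$. This is exactly how the paper uses it immediately afterwards (edges of $E_l$ must all lie in the one gap of $H_k$). Your setup takes the edge family and the gap family to come from the \emph{same} homothet $H_j$, and then the hypothesis is vacuous: by Definition~\ref{D5} the edges of $\mathcal{E}_j$ and the arcs defining the gaps of $G_j$ are complementary pieces of $\mathrm{bd}(H_j)$, so an edge of $E_j$ never lies in a gap of $G_j$ in the first place. Everything that follows in your proposal is therefore aimed at the wrong target; in particular, your ``cleanest route'' tries to conclude that all edges of $E_j$ sit around one gap of $H_j$, which is simply false whenever $|E_j|\ge 3$ (edges and gaps alternate cyclically along $\mathrm{bd}(H_j)$), and the intermediate claim that the arc of $\mathrm{bd}(H)$ inside a gap belongs to a single other $H_i$ is also unjustified (that arc can be made of several edges from several homothets).

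With the correct reading the corollary is a one--line consequence of Lemma~\ref{L7}, which is precisely why the paper states it without proof. An edge $e\in E_i$ is by definition a subarc of $\mathrm{bd}(H_i)$; if two such edges lay in two different gaps of $G_j$, then $\mathrm{bd}(H_i)\cap G_j$ would meet two distinct gaps of $H_j$, contradicting Lemma~\ref{L7}. Lemma~\ref{L6} only enters to guarantee that edges of $E_i$ (for $i\neq j$) actually do lie in $G_j$, so that the statement is not empty. No cyclic bookkeeping or contradiction argument is needed.
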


It is with this final corollary that we are able to show that every $C$-polygon will have a singleton-edge family. First as $n+1\geq 3$ we have at least three edge families in $H$. Consider an arbitrary edge family $E_k$, if this is a singleton-edge family we are done so suppose it's a multi-edge family, consider an arbitrary gap of the homothet $H_k$. This gap must have at least one edge in it as the boundary of $H$ connecting the two vertices bounding a gap, is a closed connected curve made out of edges. These edges connecting the vertices must also be contained in the gap of $H_k$ according to Lemma \ref{L6}. 

Take the edge adjacent to one of the vertices bounding the gap, that is also contained in the gap of $H_k$ and let $E_l$ be the edge family of this edge. If this is a singleton-edge family we are done, if it's a multi-edge family then the edges of this edge family must be contained in the gap of $H_k$ by Corollary \ref{C8}. So there is a gap of $H_l$ that is contained in the gap of $H_k$ which we can repeat this process with. Since we have a finite amount of edges this process must end and so we must obtain a singleton-edge family eventually.
\begin{figure}[!ht]\label{F5}
\centering
  \begin{tikzpicture} 
\node [above right, inner sep=0] (image) at (0,0) {\includegraphics[width=0.6\linewidth]{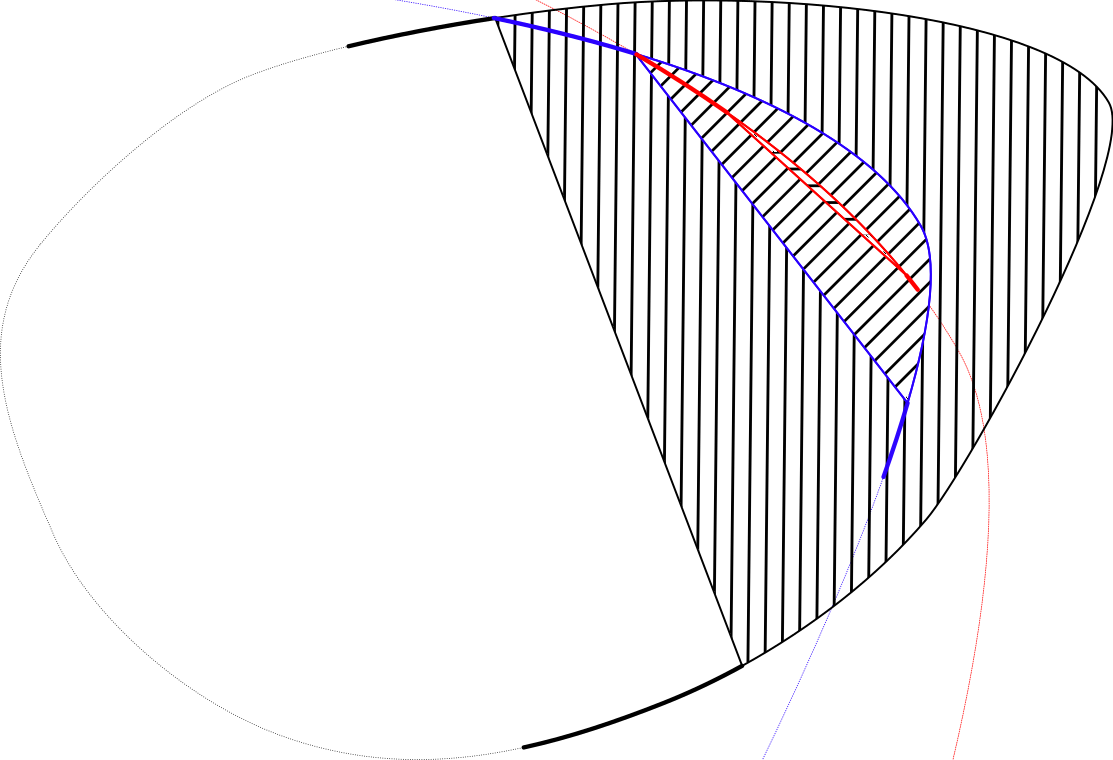}};    
\begin{scope}[
x={($0.1*(image.south east)$)},
y={($0.1*(image.north west)$)}];
        
    \filldraw [white] (8.5,7.2) circle (7pt);    
    \node[below left] at (9.7,5){\large $H_k$};  
    \node[] at (8.5,7.2){\large \textcolor{blue}{$H_l$}};

\end{scope}
\end{tikzpicture}
\caption{Depiction of the process of how to find a singleton-edge family.}
\end{figure}

The following theorem is the most general statement this section has proven for the case where $m=0$. Notice we didn't use the fact that the generating homothets of our $C$-polygon are homothets of $C$ besides in deriving Lemma \ref{PL}. So if we assume our intersection satisfies Lemma \ref{PL}, and remove the restriction that the domains we are intersecting are homothets, our proof will remain identical. 

\begin{theorem} \label{T9}
If $X=\cap_{i=1}^nX_i$ where $X_i$ is a smooth strictly convex domain for each $i\in \{1,2,\dots, n\}$, where $X$ is a proper intersection, and for every distinct $i,j\in \{1,2,\dots,n\}$, $|\mathrm{bd}(X_i)\cap \mathrm{bd}(X_j)|=2$, then the cardinality of singular points on $X$ is between $n$ and $2(n-1)$.
\end{theorem}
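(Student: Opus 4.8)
The plan is to observe that the entire development of Sections \ref{S2} and \ref{S3} used the homothety assumption in exactly one place --- the proof of Lemma \ref{PL} --- while every subsequent argument invoked only the \emph{conclusion} $|\mathrm{bd}(H_i)\cap\mathrm{bd}(H_j)|=2$ together with smoothness, strict convexity of the generating sets, and properness of the intersection. These are precisely the hypotheses granted in Theorem \ref{T9}, so the argument transfers after replacing each generating homothet $H_i$ by $X_i$. First I would note that $X$, being a proper intersection of strictly convex domains, is itself strictly convex, so its boundary splits into singular points and smooth arcs; Definitions \ref{D3} and \ref{D4} still make sense, and Lemmas \ref{Lvert} and \ref{Ledge} go through unchanged, identifying the singular points of $X$ with the vertices and the smooth arcs with the edges. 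In particular $|Vert(X)|$ is finite.

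For the lower bound I would use the analogue of (\ref{eq1}): for each $j$, $\mathrm{bd}(X_j)\cap\mathrm{int}(W_j)\neq\emptyset$, where $W_j=\cap_{i\neq j}X_i$, which follows from $X$ being reduced exactly as before. Hence every edge family $E_j$ is non-empty. The families $E_1,\dots,E_n$ are pairwise disjoint, since a common edge of $E_i$ and $E_j$ would be an infinite subset of $\mathrm{bd}(X_i)\cap\mathrm{bd}(X_j)$, contradicting $|\mathrm{bd}(X_i)\cap\mathrm{bd}(X_j)|=2$. Thus $\mathrm{bd}(X)$ is partitioned into at least $n$ edges; as $n\geq 2$ the intersection is reduced, so no single edge is the whole boundary and every edge is a proper arc with two endpoint vertices, while each vertex is incident to exactly two edges. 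Therefore the number of vertices equals the number of edges, which is $\geq n$.

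For the upper bound I would rerun the induction on $n$. The base case $n=2$ is Lemma \ref{PL} together with the remark right after it that both boundary-intersection points are singular, giving $|Vert(X)|=2=2(n-1)$. For the step, given $n+1$ sets put $W_j=\cap_{i\neq j}X_i$: it has non-empty interior (it contains $X$), it is reduced by Lemma \ref{L5}, it is a proper intersection of $n\geq 2$ smooth strictly convex domains, and the pairwise condition $|\mathrm{bd}(X_i)\cap\mathrm{bd}(X_j)|=2$ is inherited by the subfamily, so the inductive hypothesis yields $|Vert(W_j)|\leq 2(n-1)$. The gap machinery --- Definition \ref{D5}, Lemmas \ref{L6} and \ref{L7}, Corollary \ref{C8} --- uses only Lemma \ref{PL}-type facts and properness, hence still applies and produces a singleton-edge family $E_k$; then $\mathrm{bd}(X_k)$ meets $\mathrm{int}(W_k)$ in a single arc, so passing from $W_k$ to $X=W_k\cap X_k$ adds at most two new vertices, and
$$|Vert(X)|\leq |Vert(W_k)|+2\leq 2(n-1)+2=2n.$$

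There is no genuine obstacle here; the work is bookkeeping. The one point that needs care is verifying at each appeal that the cited lemma's proof truly never used homothety, and in particular confirming that the reduced subfamily $W_j$ again satisfies \emph{all} hypotheses of Theorem \ref{T9} so the induction closes, and that the ``finitely many edges'' termination of the search for a singleton-edge family remains valid --- both of which carry over directly.
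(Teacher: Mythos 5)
Your proposal is correct and follows essentially the same route as the paper, which proves Theorem \ref{T9} precisely by observing that homothety was used only to establish Lemma \ref{PL}, so that assuming its conclusion as a hypothesis lets the entire induction (singleton-edge family via the gap machinery, plus the non-empty edge families for the lower bound) carry over verbatim. Your explicit checks that $W_j$ inherits all hypotheses and that edge families are pairwise disjoint are useful bookkeeping the paper leaves implicit, but they do not constitute a different argument.
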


\subsection{The translative \textit{C}-polygon case:}
First it should be noted that all the above results established in section \ref{S2} and \ref{SS3.1} apply to translative $C$-polygons as well, this implies that a translative $C$-polygon has at least as many edges as generating translates, since every edge family is non-empty. This implies the desired lower bound for translative $C$-polygons.

For the upper bound we prove it by induction in an analogous way to the $C$-polygon case done previously, as a result we omit some details. Our base case is also handled by Lemma \ref{PL}, so let $T=\cap_{i=1}^{n+1}T_i$ be a translative $C$-polygon where $C$ is a smooth strictly convex domain. Our inductive hypothesis is that any translative $C$-polygon made up of $n$ generating translates will have at most $n$ vertices, we wish to show $T$ has at most $n+1$ vertices. We already know there must be a singleton-edge family by section \ref{SS3.1}, let $E_j$ be that singleton-edge family. In order to complete the proof we only need to show that the inclusion of $T_j$ to $W_j$, to form $T$, net total increases the vertex count by at most one. We have seen that $T_j$ is an edge family that increases our count of vertices by at most two in section \ref{SS3.1}. However, in our translative case we will show we must exclude at least one of the vertices of $W_j$ and hence net total can only increase our vertex count by at most one. We do this with the help of the following lemma.

\begin{lemma}\label{L10}
Let $T_1$ and $T_2$ be translates of $C$ that produce a proper intersection, then the boundary of $T_1$ in the exterior of $T_2$ has a Gauss image that contains a hemisphere.
\end{lemma}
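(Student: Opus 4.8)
The plan is to track the Gauss image of the boundary arc of $T_1$ lying outside $T_2$ and show it always sweeps out at least a closed half-circle of $\mathbb{S}^1$. First I would set up coordinates: write $T_2 = T_1 + x$ for some nonzero vector $x \in \mathbb{E}^2$ (translates of $C$ of the same size, so $\lambda_1 = \lambda_2$), and recall from Lemma \ref{PL} that $\mathrm{bd}(T_1) \cap \mathrm{bd}(T_2)$ consists of exactly two points, call them $p$ and $q$. These two points split $\mathrm{bd}(T_1)$ into two arcs; exactly one of them, call it $\alpha$, lies in the exterior of $T_2$ (apart from its endpoints $p,q$), since the intersection is proper and $T_1 \not\subseteq T_2$. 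The goal is to show $\Gamma_{T_1}(\alpha)$ contains a hemisphere of $\mathbb{S}^1$, i.e.\ a closed arc of length $\pi$.

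The key observation is the relationship between which arc is exterior and the direction $x$. Since $T_2 = T_1 + x$, a point $y \in \mathrm{bd}(T_1)$ lies outside $T_2 = T_1 + x$ roughly when moving from $y$ in the direction $-x$ leaves $T_1$ immediately — more precisely, the exterior arc $\alpha$ is the ``$-x$-facing'' part of $\mathrm{bd}(T_1)$. I would make this precise by using the support function: let $h_1$ be the support function of $T_1$. The two intersection points $p, q$ are characterized by $p, q \in \mathrm{bd}(T_1)$ and $p - x, q - x \in \mathrm{bd}(T_1)$ as well (since $p,q \in \mathrm{bd}(T_2)=\mathrm{bd}(T_1)+x$). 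Now consider the unit normals $u_p \in \Gamma_{T_1}(p)$ and $u_q \in \Gamma_{T_1}(q)$ at these two boundary points (singletons if $T_1$ is smooth). I claim that both $u_p$ and $u_q$ satisfy $\langle u, x\rangle = 0$ is \emph{not} generally true; rather, the correct statement is that the chord $p - q$ and the direction $x$ interact so that the arc $\alpha$ consists exactly of those boundary points whose outer normal $u$ satisfies $\langle u, -x \rangle \geq \langle u_p, -x\rangle$ appropriately — I would instead argue directly: parametrize $\mathrm{bd}(T_1)$ by the Gauss map (a homeomorphism $\mathbb{S}^1 \to \mathrm{bd}(T_1)$ when $C$ is smooth strictly convex), so $\Gamma_{T_1}(\alpha)$ is an arc of $\mathbb{S}^1$, and I need its angular length to be $\geq \pi$.

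To pin down the length, I would use the following: a boundary point $y = \Gamma_{T_1}^{-1}(u)$ lies strictly outside $T_2 = T_1 + x$ iff the supporting line of $T_1$ at $y$ with outer normal $u$ strictly separates $y$ from $T_1 + x$, which happens iff $h_1(u) > h_1(u) + \langle u, x\rangle$, i.e.\ iff $\langle u, x \rangle < 0$. Wait — that would give the exterior arc as exactly the open half-circle $\{u : \langle u, x\rangle < 0\}$, hence $\Gamma_{T_1}(\alpha) \supseteq \{u: \langle u, x\rangle \le 0\}$, a closed hemisphere, and we are done. So the real content is verifying the iff: if $\langle u, x\rangle < 0$ then the point $y$ of $\mathrm{bd}(T_1)$ with normal $u$ is outside $T_1 + x$ because the supporting halfplane $\{z : \langle z, u\rangle \le h_1(u)\}$ contains $T_1+x$ only if $h_1(u) + \langle u,x\rangle \le h_1(u)$, which holds, and $y$ is on the bounding line, so $y \notin \mathrm{int}(T_1+x)$; conversely if $\langle u, x\rangle > 0$ a short argument shows $y \in \mathrm{int}(T_1+x)$. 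I expect the main obstacle to be handling the boundary case $\langle u,x\rangle = 0$ and the two intersection points $p,q$ carefully — showing $\Gamma_{T_1}(p)$ and $\Gamma_{T_1}(q)$ are exactly the two normals orthogonal to $x$, so that the \emph{closure} $\alpha$ of the exterior arc maps onto the \emph{closed} hemisphere $\{u : \langle u,x\rangle \le 0\}$ — plus being careful about whether "contains a hemisphere" is meant as a closed or open half-circle, and making the smoothness reduction (the lemma as stated does not assume $C$ smooth, but the ambient section does, and one can note the general case follows since a non-smooth $C$ only enlarges Gauss images).
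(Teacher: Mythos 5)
Your support-function argument is essentially the paper's proof in analytic form: the paper takes the two common supporting lines of $T_1$ and $T_2$ with normals $\pm\rho$ perpendicular to the translation vector $x$, and observes that the arc of $\mathrm{bd}(T_1)$ facing away from $x$ --- precisely the points whose outer normals lie in the closed hemisphere $\{u:\langle u,x\rangle\le 0\}$ --- avoids $T_2$. Your inequality $h_{T_2}(u)=h_{T_1}(u)+\langle u,x\rangle<h_{T_1}(u)=\langle y,u\rangle$ for $\langle u,x\rangle<0$ is a clean way to see this, and together with the boundary case $\langle u,x\rangle=0$ (where strict convexity forces the unique point of $T_2$ on that common supporting line to be $y+x\neq y$, so $y\notin T_2$) it proves the lemma.

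That said, two of your auxiliary claims are false and should be cut. The ``iff'' does not hold: $\langle u,x\rangle>0$ does not place $y$ in $\mathrm{int}(T_2)$. For the unit disk with $x=(1,0)$, the exterior arc is $\{\theta:\cos\theta<1/2\}$, strictly larger than the half-circle $\{\theta:\cos\theta<0\}$, so a boundary point can lie outside $T_2$ even though its normal has positive inner product with $x$. Consequently your follow-up claim --- that $\Gamma_{T_1}(p)$ and $\Gamma_{T_1}(q)$ are exactly the two normals orthogonal to $x$ --- is also false (in the disk example the normals at $p,q$ make angles $\pm\pi/3$ with $x$). Fortunately neither claim is needed: the one correct implication already shows that every boundary point of $T_1$ with normal in the closed hemisphere $\{u:\langle u,x\rangle\le 0\}$ lies outside $T_2$, which is all the lemma asserts. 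If you keep only that direction (and the smoothness remark at the end, which is the right way to dispense with the non-smooth case), the proof is complete and matches the paper's in substance.
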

\begin{proof}
\begin{figure}[!ht]
\centering

  \begin{tikzpicture} 
\node [above right, inner sep=0] (image) at (0,0) {\includegraphics[width=0.6\linewidth]{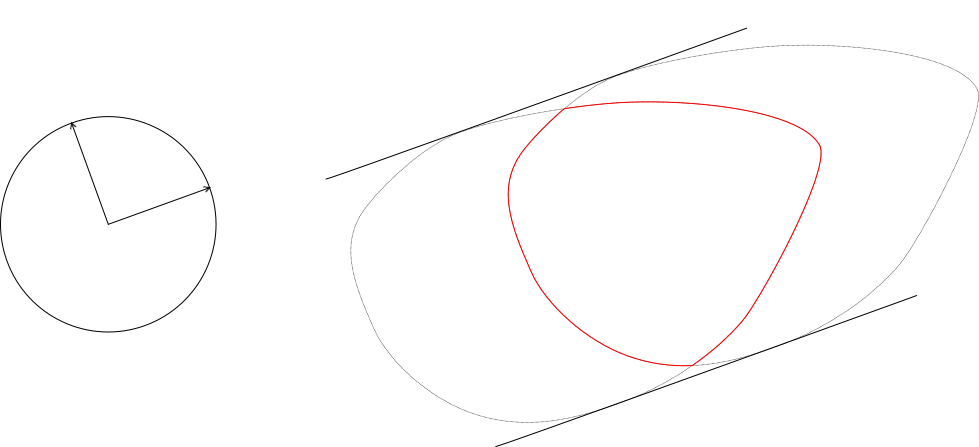}};    
\begin{scope}[
x={($0.1*(image.south east)$)},
y={($0.1*(image.north west)$)}];
    \node[] at (0,3){\large $\mathbb{S}^1$}; 
    \node[] at (1.6,5){\large $\tau$};  
    \node[] at (0.6,6){\large $\rho$};  
    \node[] at (5,9){\large $H_{\rho}$};  
    \node[] at (7,0.8){\large $H_{-\rho}$};   
\end{scope}
\end{tikzpicture}
\caption{Intersection between two translates of $C$.}
\label{F6}
\end{figure}

First let $\tau\in \mathbb{S}^1$ be the direction of the translation to get from $T_1$ to $T_2$, and let $\pm \rho\in \mathbb{S}^1$ be the two perpendicular vectors of $\tau$. Since $C$ has a bijective function for its Gauss mapping, $T_1$ and $T_2$ will each have two unique boundary points with Gauss image $\pm \rho$. Notice by the choice of $\rho$ the support lines will be equal, and both translates will be contained in the band between these two support lines, which is depicted in Figure \ref{F6}. Each open boundary arc of $T_1$ and $T_2$ between the two points of contact with the support lines,  will have a Gauss image of an open hemisphere on $\mathbb{S}^1$ centered at $\pm \tau$. Lemma \ref{PL} implies only two of these arcs have a boundary intersection with the other translate proving the lemma.

\end{proof}

This lemma also implies that the relative interior of any edge has a Gauss image that is contained in an open hemisphere. Since $T_j$ has one only edge in its edge family, it has only one maximally connected boundary arc that intersects the interior of $W_j$. This boundary arc will enter and exit the boundary of $W_j$ in two places and the only way it can include all vertices of $W_j$ is if the boundary of $T_j$ enters and leaves the boundary of $W_j$ in the relative interior of same edge.

This would contradict Lemma \ref{L10} as the relative interior of edges have a Gauss image that is contained in a hemisphere, but the pairwise intersection between $T_j$ and the translate that generates the edge of $W_j$ that $\textrm{bd}(T_j)$ intersects, violates Lemma \ref{L10}. This proves the induction as we know $W_j$ has $n$ vertices by our inductive hypothesis and the inclusion of $T_j$ to $W_j$ increases our vertex count by at most one implying $|Vert(T)|=|Vert(W_j\cap T_j)|\leq n+1$.

\begin{figure}[!ht]
    \centering    
\begin{tikzpicture}
\node [above right, inner sep=0] (image) at (0,0) {\includegraphics[scale=0.3]{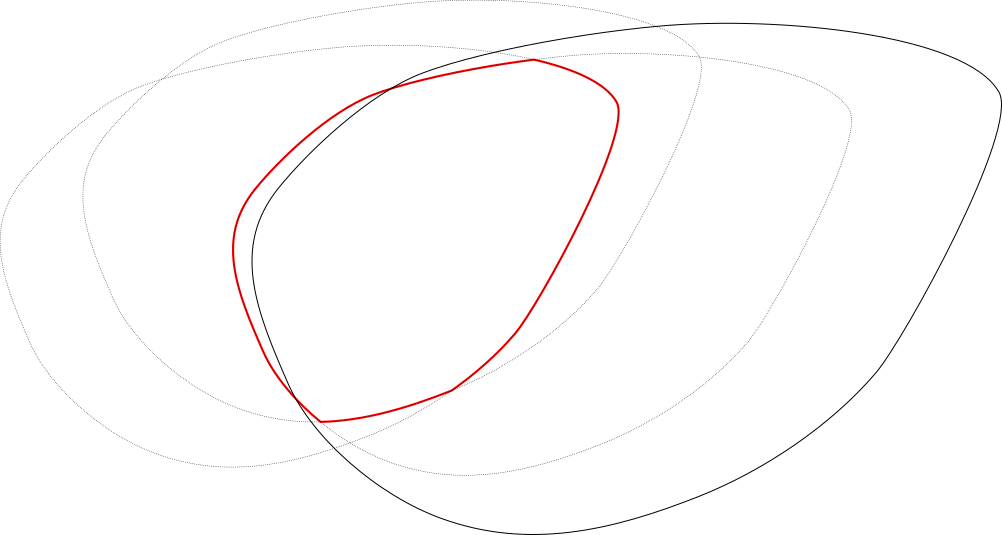}};

   \begin{scope}[
x={($0.1*(image.south east)$)},
y={($0.1*(image.north west)$)}];
       
    \node[left] at (2.5,7){\textcolor{red}{$W_j$}};
    \node[left] at (9.5,2.5){\large $H_j$}; 
    
\end{scope}

\end{tikzpicture}
    \caption{Diagram depicting that in order to contain all vertices of $W_j$ we need a homothetic copy of $C$ and not a translative one.}
\end{figure}

\section{Proof of Theorem \ref{T1}: the positive \textit{m} case } \label{S4}

When we consider a $C$-polygon, $H=\cap_{i=1}^{n} H_i$, where $C$ has $m$ singular points, things behave similarly to the smooth case. Notice Lemma \ref{PL} holds for strictly convex domains so we know that the boundary intersection between pairs of homothets will have a cardinality of two, and these two points are singular in the intersection body. We define edge families identically although now edges in edge families may not be smooth as boundary arcs of $C$ may not be smooth. This observation lets us classify vertices of $H$ into one of two kinds. Pairwise vertices are singular points of $H$ that are in the boundary of two generating homothets. Inherited vertices are singular points that are in the relative interior of an edge. Notice during the proofs in section \ref{S3} our arguments did not use the fact that our edges were smooth. This implies that we have at most $2(n-1)$ many pairwise vertices in the $C$-polygon case, and $n$ many pairwise vertices in the translative $C$-polygon case. Notice that each homothet has a non-empty edge family implying our lower bound for both the $C$-polygon and translative $C$-polygon cases. 

Edges in an edge family will be non-singleton maximally connected components of a generating homothet. We can view these edges as maximally connected components of the boundary of $C$, by inverting the homothet map. The inherited singular points on an edge will corresponds with a singular point of $C$ since the inverted homothetic image of $C$ preserve the Gauss image. Since $H$ is strictly convex it's Gauss mapping is injective which implies when we represent the edges of $H$ on $C$, they will not overlap. This implies $H$ has at most $m$ inherited vertices completing the proof.

\section{Sharpness of Results}\label{S5} 
In this section we discuss the sharpness of the bounds established in Theorem \ref{T1}. Given a class of convex domains $\mathcal{C}$, and a bound on the number of vertices a $C$-polygon has when $C\in \mathcal{C}$, we say the bound is strongly sharp when for all $C\in \mathcal{C}$ and every natural $n\geq 2$ there is a $C$-polygon with $n$ generating homothets that realizes the bound. If there exists $C\in \mathcal{C}$ such that for every $n\geq 2$ we can find a $C$-polygon with $n$ generating homothets that can realize the bound, we call the bound weakly sharp.

\begin{claim}
The upper bound of $2(n-1)+m$ vertices for $C$-polygons where $C$ is a strictly convex domain is strongly sharp.
\end{claim}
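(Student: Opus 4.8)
The plan is to construct, for each strictly convex domain $C$ with exactly $m$ singular boundary points and each integer $n\geq 2$, a $C$-polygon $H=\cap_{i=1}^n H_i$ with exactly $2(n-1)+m$ vertices. By Theorem \ref{T1} this is the maximum possible, so exhibiting such a configuration establishes strong sharpness. The idea is to decouple the two contributions to the bound: arrange the generating homothets so that they contribute the full $2(n-1)$ pairwise vertices, and simultaneously arrange that all $m$ singular points of $C$ survive as inherited vertices on the boundary of $H$.

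First I would handle the pairwise count. Induction on $n$ is natural here: starting from two homothets whose proper intersection already has $2$ vertices by Lemma \ref{PL}, I would add a homothet $H_{n+1}$ that is a very slightly shrunk and suitably translated copy of $C$ positioned so that $\mathrm{bd}(H_{n+1})$ crosses the interior of $W_{n+1}=\cap_{i=1}^n H_i$ in a single boundary arc (a singleton-edge family) whose two endpoints are \emph{new} points, lying in the relative interiors of two distinct existing edges and not coinciding with any earlier vertex. This adds exactly two new pairwise vertices and destroys none, so after $n-1$ such steps we have $2(n-1)$ pairwise vertices. Making the new homothet a near-copy of $C$ (a tiny homothety ratio close to $1$, or alternatively a large one, with $C$ sufficiently ``rounded'' near the relevant arc) guarantees its boundary slices off only a small cap of $W_{n+1}$ and enters and exits through the interiors of two different edges; the smoothness/strict convexity of $C$ away from its finitely many singular points gives enough freedom to avoid the bad coincidences.

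Second, I would ensure the $m$ singular points of $C$ all appear on $\mathrm{bd}(H)$. Fix one generating homothet, say $H_1=x_1+\lambda_1 C$, to be the ``large'' one, and arrange all the cutting in the construction above to take place in a small neighborhood of a single smooth boundary arc of $H_1$ — that is, choose the other homothets so that $\mathrm{bd}(H)\cap\mathrm{bd}(H_1)$ is one long arc containing the (homothetic images of the) $m$ singular points of $C$ in its relative interior. Since a homothety preserves the Gauss image, each such point is a singular point of $H_1$, lies in $\mathrm{bd}(H)$, and lies in the relative interior of an edge of the edge family $E_1$, hence is an inherited vertex distinct from all pairwise vertices. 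This contributes exactly $m$ more vertices. The bookkeeping to confirm the grand total — $2(n-1)$ pairwise plus $m$ inherited, with no double counting — follows from Lemma \ref{Lvert} (in the positive-$m$ adaptation of Section \ref{S4}) together with injectivity of the Gauss map of $H$.

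I expect the main obstacle to be the simultaneous realizability: making the inductive step add two \emph{genuinely new} pairwise vertices while \emph{not} disturbing the designated arc of $H_1$ that carries the $m$ singular points, and while keeping every intersection proper (reduced and full-dimensional) at each stage. Concretely, one must verify that a homothet can be inserted that cuts a small cap bounded by two distinct existing edges without those two edges being the two pieces of $E_1$ straddling the singular arc, and without wiping out any previously created vertex; this requires a careful choice of center and ratio and an argument that the ``forbidden'' configurations form a lower-dimensional set in parameter space, so a valid choice always exists. Once this genericity-type statement is pinned down, assembling the count is routine.
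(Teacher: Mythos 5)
Your overall strategy is the same as the paper's: realize $2(n-1)$ pairwise vertices via $n-1$ small, disjoint cuts along a smooth boundary arc of one distinguished generator, while leaving the $m$ singular points of $C$ untouched on that generator's edge so that they survive as inherited vertices. However, two steps of your execution fail as stated. First, if the two endpoints of the new edge of $H_{n+1}$ lie in the relative interiors of two \emph{distinct} existing edges of $W_{n+1}$, then the excised arc of $\mathrm{bd}(W_{n+1})$ necessarily contains at least one vertex of $W_{n+1}$ (passing from one edge to a different one, in either direction, crosses the vertex separating them), so that vertex is destroyed and the net gain is at most one, not two. To gain exactly two you must place both endpoints in the relative interior of the \emph{same} edge, excising a vertex-free sub-arc. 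Second, a ``very slightly shrunk and suitably translated copy of $C$'' does not cut a small localized cap: for two nearly identical homothets whose intersection is proper, the excised lune runs along a long stretch of the boundary (compare Lemma \ref{L10}, where the boundary of a translate lying outside the other translate has Gauss image containing a full hemisphere), so such a cut would sweep over previously created vertices and possibly over the singular points you are trying to preserve. The genericity argument you defer to the end is precisely where this breaks, and it is not a lower-dimensional coincidence that can be perturbed away.

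The paper avoids both problems with a more explicit construction: take $C$ itself as one generator, pick $n-1$ points in the relative interior of a single smooth arc of $\mathrm{bd}(C)$, and for each point take a homothet with center \emph{at} that boundary point and a large ratio, so that it approximates the supporting halfplane there; nudging it slightly inward cuts an arbitrarily small sliver around that point. The $n-1$ slivers are pairwise disjoint, each contributes exactly two pairwise vertices and destroys none (both cut points lie on the same smooth arc of $C$), and the $m$ singular points remain in the interiors of all the large homothets, hence on $\mathrm{bd}(H)$. Your parenthetical alternative (``or alternatively a large one'') is essentially this; if you commit to large ratios centered at boundary points and put both endpoints of each cut on the same edge, your argument becomes the paper's and the induction is no longer needed.
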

\begin{proof}
Given any strictly convex domain, $C$, with $m$ singular points, we must have a smooth strictly convex boundary arc of $C$. Pick $n-1$ many points on the relative interior of this boundary arc, by expanding $C$ with a homothet centre at each of these boundary points we can create $n-1$ homothets to be arbitrarily close to the supporting lines of the $n-1$ boundary points of $C$. Then we move these $n-1$ homothets arbitrarily inward to pierce the interior of $C$ and create $2$ pairwise vertices with each homothet giving us $2(n-1)+m$ vertices. Figure \ref{F8} depicts the construction.
\begin{figure}[!ht]
    \centering
    
\begin{tikzpicture}
\node [above right, inner sep=0] (image) at (0,0)
{\includegraphics[scale=0.3]{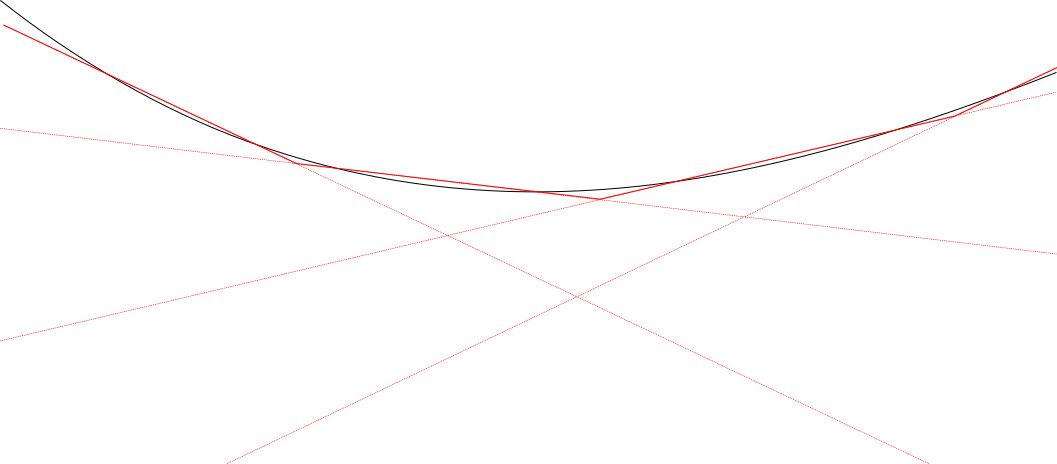}};
   \begin{scope}[
x={($0.1*(image.south east)$)},
y={($0.1*(image.north west)$)}];   
\end{scope}

\end{tikzpicture}
    \caption{Sharpness of upper bound construction for $C$-polygons where $C$ is a strictly convex domain.}
    \label{F8}
\end{figure}

\end{proof}

\begin{claim}
The lower bound of $n$ vertices for $C$-polygons and translative $C$-polygons, where $C$ is a strictly convex domain, is weakly sharp. However, the lower bound of $n$ vertices for $C$-polygons and translative $C$-polygons is strongly sharp when $C$ is a smooth strictly convex domain.
\end{claim}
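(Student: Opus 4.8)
The claim has two parts: a weak-sharpness statement for general strictly convex $C$ (with $m$ singular points) and a strong-sharpness statement for smooth strictly convex $C$. I will treat the smooth case first since it is the cleaner one and the weak-sharpness statement will follow by exhibiting one good example.

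\textbf{Strong sharpness for smooth strictly convex $C$.} Fix such a $C$ and an arbitrary $n\geq 2$; the goal is a $C$-polygon $H=\cap_{i=1}^n H_i$ with exactly $n$ vertices, i.e.\ one achieving the lower bound. The natural construction is a ``chain'' of homothets arranged cyclically: pick $n$ boundary points of $C$ (or of a fixed large reference homothet) and take small homothets $H_1,\dots,H_n$ whose generating homothets are positioned so that consecutive ones $H_i, H_{i+1}$ (indices mod $n$) meet the boundary of $H$ in exactly one shared vertex, while non-consecutive ones do not contribute vertices. Concretely I would start from $C$ itself as, say, $H_1$ scaled up, and shave it down with $n-1$ further homothets each cutting off a small ``corner'' near $n-1$ chosen smooth boundary points, arranged so the cuts link up into a cycle; one verifies that the resulting boundary $\mathrm{bd}(H)$ alternates edges from the $n$ generating homothets, each consecutive pair sharing precisely one vertex, giving $|Vert(H)|=n$. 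The verification that this is a proper intersection (nonempty interior, reduced) and that each $H_i$ genuinely contributes exactly one edge follows from the pairwise-intersection Lemma~\ref{PL} and the edge/vertex characterization of Section~\ref{S2}: each edge family is nonempty, and the cyclic arrangement forces exactly $n$ pairwise vertices and no inherited ones (since $C$ is smooth, $m=0$). For translative $C$-polygons the same cyclic construction works using only translates; Lemma~\ref{L10} is not an obstruction here because each translate still contributes one edge and we are counting the minimum, not trying to capture all vertices of a sub-polygon.

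\textbf{Weak sharpness for general strictly convex $C$.} Here we only need, for each $C$ in the class, \emph{some} infinite family of $n$ achieving the bound — but the bound is $n$, independent of $m$, so we must produce $C$-polygons with $n$ vertices none of which are inherited (inherited vertices would push the count above $n$ only if they are extra, but actually the issue is that a naive construction might be \emph{forced} to include singular points of $C$ on the boundary of $H$). The plan: since $C$ strictly convex has at least one smooth boundary arc (the $m$ singular points are isolated, or at worst the smooth part is nonempty because $C$ has nonempty interior), run exactly the cyclic construction above but confine all the relevant boundary pieces to a single smooth arc of $C$. That is, choose a fixed $C$ and restrict attention to $C$-polygons whose every edge lies within one chosen smooth arc, so no singular point of $C$ ever appears on $\mathrm{bd}(H)$; then $H$ has exactly $n$ pairwise vertices and $0$ inherited vertices. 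This gives, for this particular $C$, a $C$-polygon with $n$ vertices for every $n\geq 2$, establishing weak sharpness. (One should also remark why the bound is not \emph{strongly} sharp for general strictly convex $C$: for instance if $C$ is a Reuleaux-type domain whose singular points are so placed that any proper intersection of $n$ homothets is forced to expose at least one of them — I would give a brief explicit counterexample domain, e.g.\ a triangle-like strictly convex body with three singular points positioned so that any $C$-polygon inherits at least one of them, so that the minimum exceeds $n$ for that $C$. This negative half is what makes the statement ``weakly but not strongly sharp,'' and finding a clean such $C$ is the one genuinely delicate point.)

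\textbf{Main obstacle.} The routine part is the cyclic construction and its verification via Lemma~\ref{PL}. The real work is the negative direction implicit in the word ``weakly'': exhibiting a strictly convex $C$ for which \emph{no} $C$-polygon (for some $n$, or all $n$) attains $n$ vertices, i.e.\ where singular points of $C$ are unavoidable on the boundary of any proper intersection. I expect this requires choosing $C$ with few singular points placed ``symmetrically'' enough that the smooth arcs between them are too short to host a full cyclic chain of $n$ non-singular edges — a careful metric/combinatorial estimate on how small the homothet ratios can be made while keeping the intersection reduced. I would present one concrete such $C$ and argue the obstruction directly rather than attempt a general characterization.
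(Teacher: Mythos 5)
Your two positive constructions are workable in outline, but you miss the shortcut the paper uses and, more importantly, you leave out the part of the claim that carries all the mathematical content. For strong sharpness in the smooth case no construction is needed at all: Theorem~\ref{T1} with $m=0$ pins down $|Vert(T)|$ to exactly $n$ for \emph{every} translative $C$-polygon, so any proper intersection of $n$ translates realizes the bound (and, being a $C$-polygon with all $\lambda_i=1$, it settles the $C$-polygon case too). That same observation instantly gives weak sharpness for the class of strictly convex domains, since a smooth strictly convex domain belongs to that class. Your alternative route for weak sharpness --- taking a general $C$ with singular points and confining every edge to a single smooth arc --- is not only an unnecessary detour but is actually in tension with the geometry: the paper's argument relies on the fact that the two edges of any two-homothet intersection must contain points whose outward normals are antipodal, so if the chosen smooth arc has Gauss image smaller than a closed half-circle your $n=2$ construction cannot exist. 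Since you ultimately hedge to ``this particular $C$,'' this does not break the proposal, but the cleanest fix is to drop the smooth-arc idea entirely.

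The genuine gap is the negative direction, which you correctly identify as the main obstacle and then defer. The word ``weakly'' (as opposed to ``strongly'') is only meaningful because there is a strictly convex $C$ and an $n$ for which \emph{no} $C$-polygon attains $n$ vertices, and the paper proves exactly this. Its mechanism has two ingredients you do not supply: (i) the lemma that for any proper intersection $H_1\cap H_2$ of two homothets of a strictly convex domain there are points $x_1,x_2$ in the relative interiors of the two edges with $\Gamma_{H_1}(x_1)$ containing a point antipodal to a point of $\Gamma_{H_2}(x_2)$; and (ii) a choice of $C$ (an intersection of three discs) engineered so that the antipode of the normal at any smooth boundary point lies in the Gauss image of one of the three singular points. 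Together these force, for $n=2$, at least one inherited vertex on top of the two pairwise vertices guaranteed by Lemma~\ref{PL}, so every such $C$-polygon has at least three vertices. Your ``Reuleaux-type domain with symmetrically placed singular points'' gestures in the right direction, but without the antipodality statement and the specific Gauss-image condition on $C$ there is no argument that a singular point of $C$ is unavoidable, and your proposed route via ``metric estimates on how small the homothet ratios can be'' is not how the obstruction actually works.
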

\begin{proof}
The latter part of the claim is trivial as any translative $C$-polygon with $n$ generating translates, where $C$ is a smooth strictly convex domain, will have exactly $n$ vertices according to Theorem \ref{T1}. For the former part of the claim, the previous sentence shows that the bound is at least weakly sharp. So we need only show this bound is not strongly sharp which means we need to show there exists a strictly convex domain $C$ and a natural $n\geq 2$ where every $C$-polygon with $n$ generating homothets has more than $n$ vertices. We need only choose $n=2$ and $C$ as depicted in Figure \ref{F9}. We construct $C$ by intersecting three circles, this is a ball-polygon and we choose it such that the antipodal point of the Gauss image of any boundary point of $C$ that is smooth, is contained in the Gauss image of a vertex of $C$. By Lemma \ref{PL} we know any intersection of two homothets of $C$ will have two pairwise vertices. So we need to show every $C$-polygon generated by two homothets of $C$ will have at least one inherited vertex. The following statement completes the proof and is left to the reader. Given an intersection of two homothets, $H_1\cap H_2$, of a strictly convex domain, there exists a point in the relative interior of each of the two edges, call them $x_1\in \mathrm{bd}(H_1)$ and $x_2\in \mathrm{bd}(H_2)$, such that $\Gamma_{H_1}(x_1)$ contains a point that is antipodal to a point in $\Gamma_{H_2}(x_2)$.
\begin{figure}[!ht]
    \centering    
\begin{tikzpicture}
\node [above right, inner sep=0] (image) at (0,0) {\includegraphics[scale=0.2]{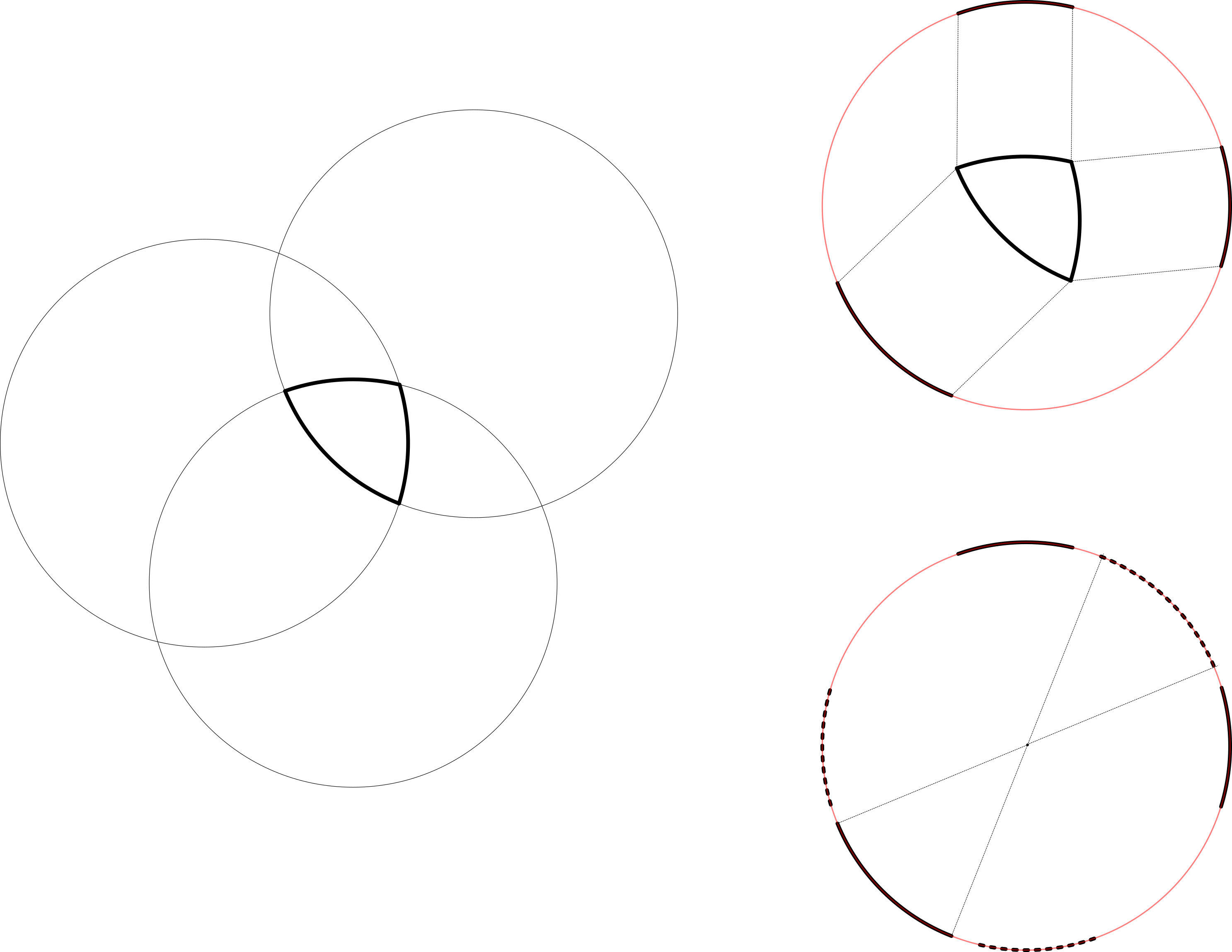}};

   \begin{scope}[
x={($0.1*(image.south east)$)},
y={($0.1*(image.north west)$)}]; 
\end{scope}

\end{tikzpicture}
    \caption{Construction of $C$.}
    \label{F9}
\end{figure}

\end{proof}
\begin{claim}
For every natural $n\geq 2$ there exist a convex domain, $C$, and a $C$-polygon, $H,$ with $n$ generating homothets such that $H$ has zero vertices.
\end{claim}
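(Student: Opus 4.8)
\medskip
\noindent\textbf{Plan of proof.}
Since Theorem~\ref{T1} shows that a strictly convex $C$ always produces at least $n$ vertices, the domain $C$ must have straight segments in its boundary, and the plan is to build everything out of ``rounded polygons''. Write $D$ for the closed unit disc; for a convex polygon $Q$ and $\lambda>0$ the body $Q+\lambda D$ is $Q$ with its corners rounded by radius $\lambda$, and its boundary is $C^1$ (each circular arc is tangent to its two incident edges), so $Q+\lambda D$ has no singular points. For $n=2$ I would take $C=S+D$ with $S=[0,L]\times\{0\}$ a stadium, and for $0<\ell<L$ set $H=([0,\ell]\times\{0\})+D$, $H_1=C$, $H_2=C+(\ell-L,0)$; one checks directly that $H_1\cap H_2$ is the stadium over $[0,\ell]\times\{0\}$, namely $H$, that this intersection is reduced (deleting either $H_i$ strictly enlarges it) and has interior points, and that $H$ is $C^1$. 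For $n\ge 3$ the main construction is as follows: fix $r_0>0$ and $0<\rho<R$ with $r_0\ll\rho\ll R$, let $d_1,\dots,d_n$ (indices mod $n$) be the edge normals of a regular $n$-gon, let $P$ and $P_C$ be the regular $n$-gons with these normals and inradii $\rho$ and $R$, and put $C=P_C+r_0D$, $H=P+r_0D$. For each $i$ let $w_i$ be the vector with $\langle w_i,d_i\rangle=\langle w_i,d_{i+1}\rangle=1$ (so $w_i$ points along $d_i+d_{i+1}$), set $t_i=(\rho-R)w_i$ and $H_i=C+t_i$; this is the translate of $C$ whose two edges with normals $d_i,d_{i+1}$ lie on the lines carrying the corresponding edges of $H$, and whose core vertex between those two edges is exactly the core vertex $v_i$ of $P$.

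\medskip
\noindent I would then verify three things. \emph{First}, $H\subseteq H_i$ for every $i$: passing to support functions this is equivalent to $P\subseteq P_C+t_i$, hence to $\langle w_i,d_j\rangle\le 1$ for all $j$; since the angle between $w_i$ and an edge normal $d_j$ is an odd multiple of $\pi/n$, this amounts to $\cos(\mathrm{angle})\le\cos(\pi/n)$, which holds, with equality exactly when $j\in\{i,i+1\}$, so $H\subseteq\bigcap_iH_i$. \emph{Second}, $\bigcap_iH_i\subseteq H$: each $H_i$ lies in the two half-planes bounded by the lines of the $d_i$- and $d_{i+1}$-edges of $H$, so every $z\in\bigcap_iH_i$ lies in the ``unrounded'' polygon $Q:=\bigcap_j\{x:\langle x,d_j\rangle\le h_H(d_j)\}\supseteq H$; if $z\notin H$ then $z$ lies in one of the corner regions of $Q\setminus H$, near some core vertex $v_k$, but in a neighbourhood of $v_k$ the homothet $H_k$ coincides with $H$ (both equal the rounded corner whose arc lies on $\partial B(v_k,r_0)$, because $v_k$ is a core vertex of $P_C+t_k$ as well), so $z\notin H_k$ -- contradiction; hence $\bigcap_iH_i=H$. \emph{Third}, reducedness: deleting $H_k$ destroys the rounding of corner $k$, since near $v_k$ the surviving homothets contribute only the two edge-lines $\langle x,d_k\rangle=h_H(d_k)$ and $\langle x,d_{k+1}\rangle=h_H(d_{k+1})$ (from $H_{k-1}$ and $H_{k+1}$), so $\bigcap_{i\neq k}H_i$ contains the point $q_k$ where those lines meet and $q_k\notin H$; thus $\bigcap_{i\neq k}H_i\supsetneq\bigcap_iH_i$ for each $k$. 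Together with $\inte(H)\neq\emptyset$, these show that $H$ is a $C$-polygon with $n$ generating homothets, and $Vert(H)=\emptyset$ because $\partial H$ is $C^1$.

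\medskip
\noindent The step I expect to be the real obstacle is the inclusion $\bigcap_iH_i\subseteq H$. It is tempting to hope that $\bigcap_i(A_i+r_0D)=\bigl(\bigcap_iA_i\bigr)+r_0D$, but this is false in general: the right-hand side is strictly smaller, missing exactly the ``pointed'' corners. The whole construction is engineered around this -- each corner of $H$ must be guarded by one homothet that is a genuine circular arc of the right radius there, and the two-edge alignment defining $t_i$ is precisely what forces a core vertex of $C+t_i$ to land on a core vertex of $P$, so that $H_i$ and $H$ agree near that corner. Once the picture is set up, the remaining inequalities ($\langle w_i,d_j\rangle\le 1$, and the fact that the relevant neighbourhoods are wide enough, which is where $r_0\ll\rho\ll R$ is used) should be routine.
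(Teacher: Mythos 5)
Your construction is correct and is essentially the paper's: both round the corners of an $n$-gon and arrange the $n$ generating bodies so that each one supplies exactly one rounded corner of $H$ while all straight edges align tangentially, giving a $C^1$ (hence vertex-free) boundary, with the stadium/rounded-square handling $n=2$. Your version is worked out in more detail (support-function inclusion checks, the witness point $q_k$ for reducedness) and uses $n$ translates of a single large rounded $n$-gon rather than the paper's one copy of $C$ plus $n-1$ enlarged homothets, which incidentally yields the slightly stronger translative statement, but the underlying idea is the same.
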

\begin{proof}

First notice that this condition on the bound of zero vertices is weaker than weak sharpness. Given an arbitrary natural $n\geq 3$ we choose our convex domain $C$ to be a polygon with $n$ vertices that have sufficiently small rounded corners. To construct a $C$-polygon with $n$ generating homothets we take $C$ and enlarge the other $n-1$ copies of $C$ to smoothly transition with $n-1$ many smoothed corners of $C$ as depicted in Figure \ref{F10} for the case where $n=3$ and $4$. For the case where $n=2$ two translates of the rounded square can construct a translative $C$-polygon with zero vertices.

\begin{figure}[!ht]
\centering
\begin{subfigure}{.49\textwidth}
  \centering
    \begin{tikzpicture} 
\node [above right, inner sep=0] (image) at (0,0) {\includegraphics[width=0.8\linewidth]{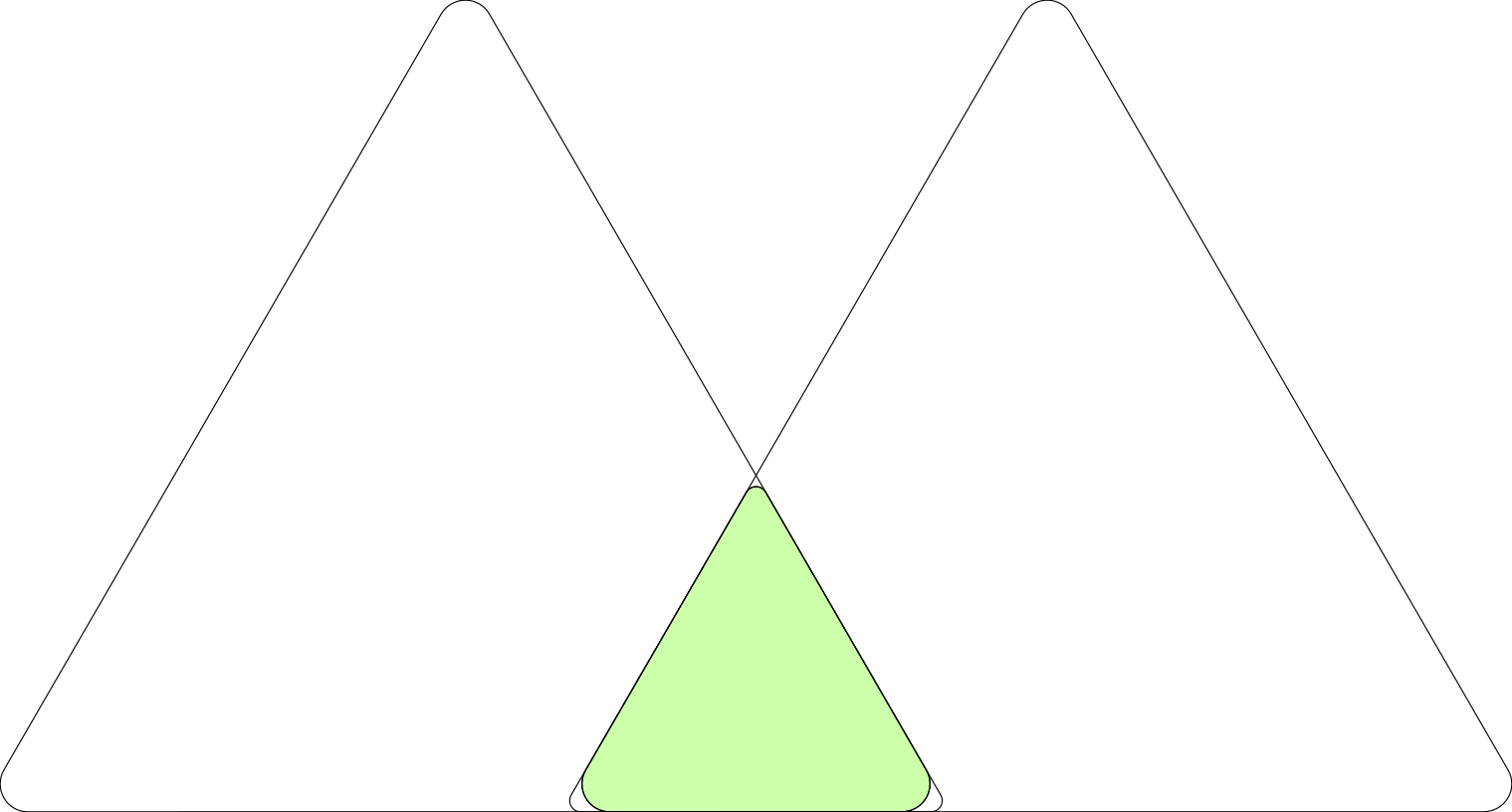}};  
\begin{scope}[
x={($0.1*(image.south east)$)},
y={($0.1*(image.north west)$)}];
       
    
\end{scope}
\end{tikzpicture}
\end{subfigure}
\begin{subfigure}{.49\textwidth}
  \centering
    \begin{tikzpicture} 
\node [above right, inner sep=0] (image) at (0,0) 
{\includegraphics[width=0.5\linewidth]{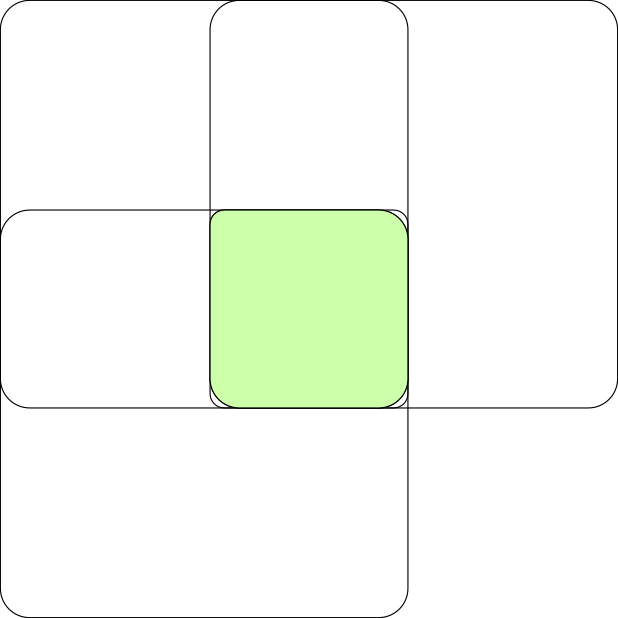}};
\begin{scope}[
x={($0.1*(image.south east)$)},
y={($0.1*(image.north west)$)}];
       
    
\end{scope}
\end{tikzpicture}
\end{subfigure}
\caption{Depiction of the construction of a smooth $C$-polygon.}
\label{F10}
\end{figure}
\end{proof}

We end this paper with an open problem. 
\begin{problem}
If $C$ is a convex domain with $m$ singular points, then the number of singular points a $C$-polygon with $n$ generating domains will have is at most $2(n-1)+m$.
\end{problem}

\bigskip


\noindent Illya Ivanov \\
\small{Department of Mathematics and Statistics, University of Calgary, Canada}\\
\small{E-mail: \texttt{ilya.ivanov1@ucalgary.ca}}

\bigskip

\noindent Cameron Strachan \\
\small{Department of Mathematics and Statistics, University of Calgary, Canada}\\
\small{E-mail: \texttt{braden.strachan@ucalgary.ca}}


\begin{thebibliography}{GGM}

\bibitem{UB}
P. McMullen, On the upper-bound conjecture for convex polytopes, {\it J. Combinat. Theorey, Ser.B.} \textbf{10} (1971), 187--200.

\bibitem{SOTU}
P. K. Agarwal, J. Pach, and M. Sharir, State of the Union (of Geometric Objects), {\it Technical report, American Mathematical Society} (2008)

\bibitem{BP}
K. Bezdek, Z. Langi, M Naszodi, and P. Papez, Ball-Polyhedra, {\it Discrete Comput Geom} \textbf{38} (2007), 201--230. 


\end{thebibliography}
\end{document}